\newtheorem{theorem}{Theorem}[section]
\newtheorem{corollary}[theorem]{Corollary}
\newtheorem{lemma}[theorem]{Lemma}
\newtheorem{remark}[theorem]{Remark}
\makeatletter \@addtoreset{equation}{section} \makeatother
\newcommand{\N}{\mathbb{N}}
\newcommand{\R}{\mathbb{R}}
\newcommand{\PP}{\mathbb{P}}
\newcommand{\oo}{\mathrm{o}}
\newcommand{\leqdef}{\vcentcolon=}
\newcommand{\rd}{{\rm d}}
\begin{document}

\begin{frontmatter}

    \title{An improvement of Tusn\'ady's inequality in the bulk}%

    \author[a1,a2]{Fr\'ed\'eric Ouimet\texorpdfstring{\corref{cor1}\fnref{fn1}}{)}}%

    \address[a1]{McGill University, Montreal, Canada.}%
    \address[a12]{California Institute of Technology, Pasadena, USA.}%

    \cortext[cor1]{Corresponding author}%
    \ead{frederic.ouimet2@mcgill.ca}%

    \fntext[fn1]{F.\ O.\ is supported by a postdoctoral fellowship from the NSERC (PDF) and postdoctoral fellowships from the FRQNT (B3X supplement and B3XR).}%

    \begin{abstract}
        We prove a non-asymptotic generalization of the refined continuity correction developed in \cite{MR538319} for the Binomial distribution, which we then use to improve the versions of Tusn\'ady's inequality from \cite{MR1955348} and \cite{MR2154001} in the bulk.
    \end{abstract}

    \begin{keyword}
        Tusn\'ady's inequality \sep KMT approximation \sep quantile coupling \sep local limit theorem \sep continuity correction \sep Binomial distribution \sep Gaussian approximation
        \MSC[2020]{Primary: 60E15 Secondary: 62E17, 62E20, 60F99}
    \end{keyword}

\end{frontmatter}

\section{Introduction}\label{sec:intro}

    Let $Z\sim \mathrm{Normal}\hspace{0.3mm}(0,1)$, let $\Phi$ denote the cumulative distribution function (c.d.f.) of $Z$, and let $F$ be the c.d.f.\ of the $\mathrm{Binomial}\hspace{0.3mm}(m,1/2)$ distribution.
    If
    \begin{equation}
        Y_m = \frac{m}{2} + \frac{\sqrt{m}}{2} Z\sim \mathrm{Normal}\hspace{0.3mm}(\tfrac{m}{2},\tfrac{m}{4}) \quad \text{and} \quad X_m \leqdef F^{\star}(\Phi(Z))\sim \mathrm{Binomial}\hspace{0.2mm}(m,\tfrac{1}{2}),
    \end{equation}
    where $F^{\star}(p) \leqdef \inf\{x\in \R : F(x) \geq p\}$ denotes the generalized inverse distribution function (or quantile function) associated with $F$, then Tusn\'ady's inequality refers to
    \begin{equation}\label{eq:Tusnady.inequality.original}
        \big|X_m - Y_m\big| \leq 1 + \frac{Z^2}{8}, \quad \text{for all } m\in \N.
    \end{equation}
    This inequality originated in G.\ Tusn\'ady's PhD thesis, i.e., \cite{Tusnady1977phd}, although the author did not provide a complete proof.

    For the uninitiated reader, Tusn\'ady's inequality (also called Tusn\'ady's lemma) lies at the heart of the modern approach to the proof of the Koml\'os-Major-Tusn\'ady (KMT) approximation of the empirical process by a sequence of Brownian bridges on the same probability space, which is one of the most important results of probability theory in the past 50 years.
    The original proof of the KMT approximation was given in \cite{MR375412,MR402883} who applied a diadic scheme together with a somewhat different quantile inequality, whereas \cite[Chapter 4]{MR666546} presented a way to prove the KMT approximation using the diadic scheme with Tusn\'ady's inequality instead (although they didn't prove the inequality either).
    A full proof of Tusn\'ady's inequality first appeared in \cite{MR972783} who used it to improve the rate of convergence of the KMT approximation.
    Throughout the years, many other authors revisited the details of the proof of the KMT approximation such as \cite{MR893903}, \cite{Major_2000_tech_report} and \cite{MR1890338}, some of them (more recently) using some version of Tusn\'ady's inequality such as \cite[Chapter 3]{MR1215046}, \cite{MR1955348}, \cite[Chapter 1]{Dudley_2005_KMT} and \cite{MR2255196}.
    The KMT approximation has been extended to the multivariate setting in \cite{MR996984} and \cite{MR1616527}, to the functional setting in \cite{MR1955345}, and to the dependent setting in \cite{MR3178474}.
    For a review and new results on quantile coupling inequalities and their applications, we refer the reader to \cite{MR3007210} and references therein.

    The proof of Tusn\'ady's inequality proposed in the Appendix of \cite{MR972783} was considered to be highly technical and hard to read by the mathematical community, so it was later simplified in \cite{MR1955348}, who also improved \eqref{eq:Tusnady.inequality.original} as follows:
    \vspace{-1mm}
    \begin{equation}\label{eq:Tusnady.inequality.Massart.2002}
        \big|X_m - Y_m\big| \leq \frac{3}{4} + \frac{Z^2}{8}, \quad \text{for all } m\in \N.
    \end{equation}
    The main idea behind the proof of \eqref{eq:Tusnady.inequality.Massart.2002} is to sum up local comparisons between Binomial and Gaussian probabilities; the methods are elementary but it still requires a careful analysis.

    Around the same time as Massart, \cite{MR2154001} came up with a different approach to improve Tusn\'ady's inequality.
    They applied Laplace's method to the integral representation of the survival function of $X_m\sim \mathrm{Binomial}\hspace{0.2mm}(m,1/2)$, i.e.,
    \begin{equation}
        \PP(X_m\geq k) = \frac{m!}{(k - 1)! (m - k)!} \int_0^{1/2} t^{k-1} (1 - t)^{m-k} \rd t, \quad k\in \{1,2,\dots,m\},
    \end{equation}
    to give yet another version of Tusn\'ady's inequality:
    \begin{equation}\label{eq:Tusnady.inequality.Carter.Pollard.2004}
        \big|X_m - Y_m\big| \leq C + \widetilde{C} \, Z^2 \Big(1 \wedge \frac{|Z|}{\sqrt{m}}\Big), \quad \text{for all } m\in \N,
    \end{equation}
    where $C,\widetilde{C} > 0$ are universal constants assumed large enough but whose explicit form is unknown.
    As pointed out in \cite{MR1955348}, the estimate \eqref{eq:Tusnady.inequality.Carter.Pollard.2004} is not strictly comparable to \eqref{eq:Tusnady.inequality.Massart.2002}: it is better asymptotically but the absolute constants are less precise.
    Massart also explains that improving the constant $3/4$ to $1/2$ is impossible but the optimal constant should be closer to $1/2$ than $3/4$.
    In particular, he conjectured that $C = 1/2 + \text{const.} / \sqrt{m}$ could potentially work by refining the approach in \cite{MR2154001}.

    In this paper, our goal is to improve both \eqref{eq:Tusnady.inequality.Massart.2002} and \eqref{eq:Tusnady.inequality.Carter.Pollard.2004} under the restrictions that $m$ is large enough (with an explicit threshold) and $Z$ falls in the ``bulk'' of the standard normal distribution.
    The precise statement is in the theorem below, where we obtain the aforementioned $1/2$ constant plus an asymptotically negligible residual, but the expansion is also more precise in terms of $Z$.
    Our approach to the proof is outlined in Section~\ref{sec:approach}.

    \begin{theorem}[Tusn\'ady's inequality in the bulk]\label{thm:Tusnady.inequality.improved}
        Let $Z\sim \mathrm{Normal}\hspace{0.3mm}(0,1)$, let $\Phi$ denote the c.d.f.\ of $Z$, and let $F$ denote the c.d.f.\ of the $\mathrm{Binomial}\hspace{0.3mm}(m,1/2)$ distribution.
        Let $m$ be a natural integer satisfying {\color{black} $\sqrt{2\pi} \, 20^6 m^{-1} \leq \sqrt{\log m}$} (for example, $m\geq 4 \cdot 10^7$ works).
        If $Y_m = \frac{m}{2} + \frac{\sqrt{m}}{2} Z\sim \mathrm{Normal}\hspace{0.3mm}(m/2,m/4)$ and $X_m \leqdef F^{\star}(\Phi(Z))\sim \mathrm{Binomial}\hspace{0.2mm}(m,1/2)$, then
        \begin{equation}\label{eq:thm:Tusnady.inequality.improved.eq.1.OG}
            \left|X_m - Y_m - \frac{Z - Z^3}{24 \sqrt{m}}\right| \leq  \frac{1}{2} + \frac{2 \cdot 20^6}{m}, \quad \text{for $|Z| \leq \sqrt{\log m}$}.
        \end{equation}
        In particular, we also have the (weaker) estimate
        \begin{equation}\label{eq:thm:Tusnady.inequality.improved.eq.2.OG}
            \big|X_m - Y_m\big| \leq  \frac{1}{2} + \frac{1}{24} \sqrt{\frac{\log m}{m}} + \frac{1}{24} \cdot \frac{|Z|^3}{\sqrt{m}} + \frac{2 \cdot 20^6}{m}, \quad \text{for $|Z| \leq \sqrt{\log m}$}.
        \end{equation}
    \end{theorem}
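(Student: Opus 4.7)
The plan is to exploit the quantile coupling via its defining relation: $X_m$ is the unique integer satisfying $F(X_m - 1) \leq \Phi(Z) < F(X_m)$, so after applying $\Phi^{-1}$ one has $\Phi^{-1}(F(X_m - 1)) \leq Z < \Phi^{-1}(F(X_m))$. The task thus reduces to obtaining a precise non-asymptotic approximation of $\Phi^{-1}(F(k))$ in terms of the standardized continuity-corrected argument $y_k \leqdef (2k + 1 - m)/\sqrt{m}$, and then inverting algebraically to solve for $X_m$ as a function of $Z$.

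The key tool is the non-asymptotic refined continuity correction announced in the abstract, a quantitative generalization of \cite{MR538319}. I expect it to take the form
\begin{equation*}
\Phi^{-1}(F(k)) = y_k + \frac{P(y_k)}{m} + \rho_m(k), \qquad |\rho_m(k)| \leq \frac{C_0 (1 + y_k^6)}{m^{3/2}},
\end{equation*}
uniformly for $|y_k|$ in the bulk, where $P$ is a specific odd cubic polynomial and $C_0$ is an explicit constant of order $20^6$; matching the theorem's main term $(Z - Z^3)/(24\sqrt{m})$ pins down $P(y) = (y^3 - y)/12$. I would establish this expansion by chaining together (i) a sharp Stirling-based non-asymptotic local limit theorem for the Binomial$(m,1/2)$ p.m.f.\ with explicit remainders, (ii) a conversion of p.m.f.\ estimates to c.d.f.\ estimates by careful comparison with Riemann sums of the Gaussian density, and (iii) a controlled Taylor inversion of $\Phi$ with explicit Lagrange remainder in the bulk. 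The numerical threshold $m \geq 4 \cdot 10^7$ in the theorem reflects the accumulated explicit constants across these three steps.

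With the expansion in hand, set $h(y) \leqdef y + P(y)/m$, which is strictly increasing for $m$ large. The coupling becomes $h(y_{X_m - 1}) + \rho_m(X_m - 1) \leq Z < h(y_{X_m}) + \rho_m(X_m)$. Using $y_{X_m - 1} = y_{X_m} - 2/\sqrt{m}$ and the Taylor inversion $h^{-1}(w) = w - P(w)/m + \OO(1/m^2)$, I would pin $y_{X_m}$ down to a window of length $2/\sqrt{m} + \OO(m^{-3/2})$ centered at $h^{-1}(Z) + 1/\sqrt{m}$. Writing $y_{X_m} = h^{-1}(Z) + 1/\sqrt{m} + u$ with $|u| \leq 1/\sqrt{m} + \OO(m^{-3/2})$, multiplying by $\sqrt{m}/2$, and invoking $X_m = (\sqrt{m}/2)\,y_{X_m} + m/2 - 1/2$, the deterministic half-integers cancel exactly and one arrives at
\begin{equation*}
X_m - Y_m = \frac{Z - Z^3}{24\sqrt{m}} + \frac{\sqrt{m}}{2}\,u + \OO(1/m),
\end{equation*}
with $|\tfrac{\sqrt{m}}{2}\,u| \leq 1/2 + \OO(1/m)$, which is \eqref{eq:thm:Tusnady.inequality.improved.eq.1.OG}. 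The weaker estimate \eqref{eq:thm:Tusnady.inequality.improved.eq.2.OG} then follows by the triangle inequality and the bulk bound $|Z| \leq \sqrt{\log m}$.

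The main obstacle is the first step: producing a refined continuity correction with a fully explicit $m^{-3/2}$-type remainder valid uniformly throughout the bulk, rather than the asymptotic $\oo(1/m)$ result of \cite{MR538319}. This requires sharp non-asymptotic Stirling bounds on $\binom{m}{k}/2^m$, explicit control of the summation error when converting the p.m.f.\ local limit theorem into a c.d.f.\ expansion, and explicit Taylor remainders for $\Phi^{-1}$ in the bulk. Once this ingredient is assembled, the second half of the argument is elementary; in particular, the constant $1/2$ (conjectured optimal in \cite{MR1955348}) appears automatically because the rounding in the $y$-scale has half-width exactly $1/\sqrt{m}$, which translates to $1/2$ in the $k$-scale with no further slack.
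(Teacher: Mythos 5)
Your overall plan is sound and shares the same skeleton as the paper's: (i) prove a non-asymptotic refined continuity correction (the paper's Theorem~\ref{thm:Cressie.non.asymptotic} and Corollary~\ref{cor:refined.CC.x.1.2}), and (ii) invert it in the bulk and read off $X_m - Y_m$ from the coupling. The technical route in step (ii) is genuinely different, though: you apply $\Phi^{-1}$ to the two-sided coupling chain $F(X_m - 1) \leq \Phi(Z) < F(X_m)$ and invert $h(y) = y + P(y)/m$, whereas the paper keeps everything in probability scale --- it derives a one-sided inequality $\Phi(z) \leq \PP(X_m \leq m/2 + t)$ by absorbing the additive CDF error via the mean value theorem into a shift of the argument of $\Phi$ (Eq.~\eqref{eq:def.z}), then inverts a cubic explicitly, applies $F^\star$, and obtains the opposite bound by symmetry of $X_m - m/2$ and $Z$. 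Your two-sided rounding argument in $y$-space, yielding half-width $1/\sqrt{m}$ and hence the constant $1/2$ automatically, is a nice, more symmetric way to finish than the paper's, and both are legitimate.

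There is, however, a genuine gap in your claimed remainder bound $|\rho_m(k)| \leq C_0(1 + y_k^6)/m^{3/2}$ for $\Phi^{-1}(F(k)) - h(y_k)$. The continuity correction (Corollary~\ref{cor:refined.CC.x.1.2}) controls the CDF error \emph{additively}: $|F(k) - \Phi(h(y_k))| \leq 10^6 2^5 m^{-3/2}$, uniformly over the bulk. Passing through $\Phi^{-1}$ divides this error by $\phi$ at a nearby point. Near the edge of the bulk, $y_k \approx \pm\sqrt{\log m}$, one has $\phi(y_k)^{-1} \approx \sqrt{2\pi m}$, so the resulting remainder in $\Phi^{-1}$-space is of order $m^{-1}$ rather than $m^{-3/2}$ --- a polynomial factor $1 + y_k^6$ cannot compensate for $e^{y_k^2/2}$. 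This is precisely why the paper's shift in Eq.~\eqref{eq:def.z} is $10^6 2^5 m^{-3/2}/\phi(\sqrt{\log m}) = \sqrt{2\pi}\cdot 10^6 2^5\, m^{-1}$. With the correct order $\rho_m = \OO(m^{-1})$, your step $|\tfrac{\sqrt{m}}{2}u| \leq \tfrac{1}{2} + \OO(m^{-1})$ needs to be reexamined, since the $\rho_m$ contribution to $\tfrac{\sqrt{m}}{2}u$ is then $\OO(m^{-1/2})$, not $\OO(m^{-1})$. To close this, you would either need a \emph{relative} (i.e.\ $\phi$-weighted) error in the continuity correction --- which the stated form of Theorem~\ref{thm:Cressie.non.asymptotic} does not provide, since its bound $10^6\tau^5/m^{3/2}$ is uniform and not multiplied by $\phi(\delta_{\tilde a})$ --- or carry the $\phi^{-1}$ amplification explicitly through the argument as the paper does with $\phi(\sqrt{\log m})$.
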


    \begin{remark}\label{rem:Tusnady.improvement}
        Under the restriction $\sqrt{2\pi} \, 20^6 m^{-1} \leq \sqrt{\log m}$ and $|Z| \leq \sqrt{\log m}$, Theorem~\ref{thm:Tusnady.inequality.improved} improves on Massart's result \eqref{eq:Tusnady.inequality.Massart.2002} in two ways: asymptotically as $m\to \infty$, and by lowering the constant $3/4$ to $1/2 + 24^{-1} \sqrt{m^{-1} \log m} + 2 \cdot 20^6 m^{-1}$.
        It also improves on Carter and Pollard's result \eqref{eq:Tusnady.inequality.Carter.Pollard.2004} by showing that it holds for the explicit values of $C,\widetilde{C}$:
        \begin{equation}
            C = \frac{1}{2} + \frac{1}{24} \sqrt{\frac{\log m}{m}} + \frac{2 \cdot 20^6}{m} \quad \text{and} \quad \widetilde{C} = \frac{1}{24}.
        \end{equation}
        One instance of the bound \eqref{eq:thm:Tusnady.inequality.improved.eq.1.OG} is illustrated in Figure~\ref{fig:graph.mathematica} for $100 \leq m \leq 1000$.
    \end{remark}

    \begin{figure}
        \centering
        \includegraphics[width=110mm]{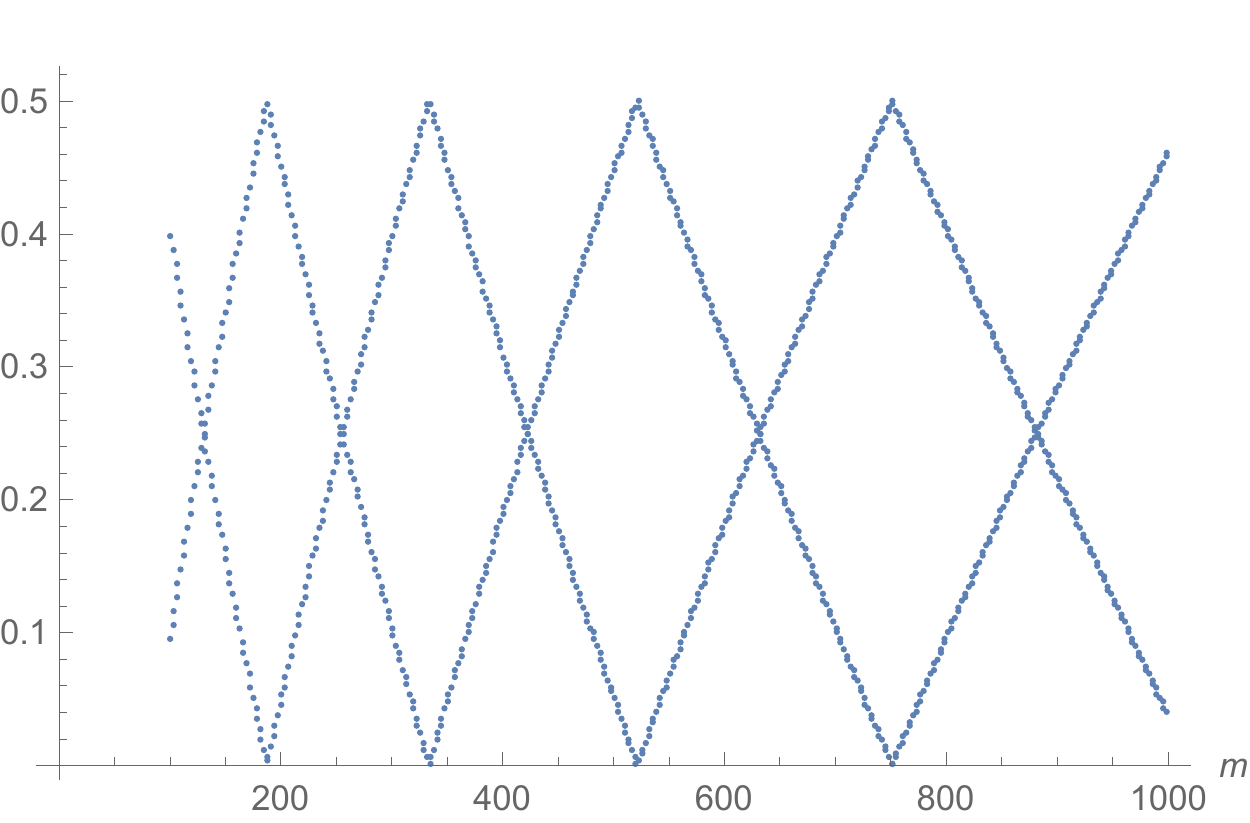}
        \caption{The graph of $\big|X_m(\omega) - Y_m(\omega) - \frac{Z(\omega) - Z^3(\omega)}{24 \sqrt{m}}\big|$ for one instance $\omega$, as a function of $m$.}
        \label{fig:graph.mathematica}
    \end{figure}

\section{Our approach}\label{sec:approach}

    The first main step in our proof of Theorem~\ref{thm:Tusnady.inequality.improved} is to establish a non-asymptotic expansion for the c.d.f.\ and survival function associated with the Binomial probabilities
    \begin{equation}\label{eq:Binomial.pdf}
        P_{k,m}(x) = \binom{m}{k} x^k (1 - x)^{m - k}, \quad k\in \{0,1,\dots,m\},
    \end{equation}
    in terms of the c.d.f.\ and survival function of the normal distribution with the same mean and variance, i.e., $\mathrm{Normal}\hspace{0.3mm}(m x, m x (1 - x))$, the density of which is
    \begin{equation}\label{eq:phi.M}
        y\mapsto \frac{1}{\sqrt{2 \pi} \, \sigma_{m,x}} \exp\Big(-\frac{(y - m x)^2}{2 \sigma_{m,x}^2}\Big), \quad \text{where } \sigma_{m,x}^2 \leqdef m x (1 - x).
    \end{equation}
    Note that \eqref{eq:phi.M} can also be written as $\frac{1}{\sigma_{m,x}} \phi(\delta_y)$ if
    \begin{equation}
        \phi(z) \leqdef \frac{1}{\sqrt{2 \pi}} \exp\Big(-\frac{z^2}{2}\Big), \quad z\in \R, \quad \text{and} \quad \delta_y \leqdef \frac{y - m x}{\sigma_{m,x}}.
    \end{equation}
    More specifically, if $\Phi$ and $\Psi$ denote the c.d.f.\ and the survival function of the standard normal distribution, respectively, then our first goal is to obtain real numbers $c_{m,x}^{\star}(a)$ and $d_{m,x}^{\star}(a)$, as accurate as possible, such that, for some values $a\in \{0,1,\dots,m\}$,
    \begin{equation}\label{eq:non.formal.cdf.approx}
        \sum_{k=0}^a P_{k,m}(x) \approx \Phi(\delta_{a - d_{m,x}^{\star}(a)}) \quad \text{and} \quad \sum_{k=a}^m P_{k,m}(x) \approx \Psi(\delta_{a - c_{m,x}^{\star}(a)}).
    \end{equation}
    These types of results are referred to as continuity corrections in the literature.
    In Theorem~2 of \cite{MR538319}, general expressions for $c_{m,x}^{\star}(a)$ and $d_{m,x}^{\star}(a)$ were found for any fixed $a$, using only elementary methods (Taylor expansions and Stirling's formula), but his results are asymptotic.
    Similarly to \cite{MR1955348}, although the details are very different, the idea of the proof in \cite{MR538319} is to develop a local limit theorem for the Binomial distribution and sum up the approximated probabilities to get an estimate of the survival function in terms of the Gaussian survival function $\Psi$. One important difference is that the errors are additive in \cite{MR538319}, whereas they are multiplicative in \cite{MR1955348} (and \cite{MR2154001}).
    Since Tusn\'ady's inequality is generally valid for finite samples (with possibly an explicit lower bound on the parameter $m$), the results in \cite{MR538319} are not sufficient for our purpose.
    To achieve our first goal, we will therefore prove a non-asymptotic version of \cite[Theorem~2]{MR538319}, using the same elementary methods (see Theorem~\ref{thm:Cressie.non.asymptotic}).
    However, compared to Cressie, we need to find exact bounds for every single error term that arises in the proof, which adds a significant layer of difficulty (this is apparent from the proof).
    Our approximation is also uniform for $a$ in the {\it bulk} of the Binomial distribution and uniform for $x$ in a compact set away from the boundary values $0$ and $1$, namely for
    \begin{align}
        &a\in B_{m,x} \leqdef \bigg\{k\in \{0,1,\dots,m\} :
            \begin{array}{l}
                \max\left\{\left|\frac{\delta_k}{\sqrt{m}} \sqrt{\frac{1 - x}{x}}\right|, \left|\frac{\delta_k}{\sqrt{m}} \sqrt{\frac{x}{1 - x}}\right|\right\} \leq m^{-1/3}
            \end{array}
            \bigg\}, \label{eq:bulk} \\
        &x\in \mathcal{X}_{\tau} \leqdef \bigg\{t\in (0,1) : \max\{t^{-1}, (1 - t)^{-1}\} \leq \tau\bigg\}, \label{eq:X.tau}
    \end{align}
    where $\tau\geq 2$ is a fixed parameter.
    Below is the precise statement:

    \begin{theorem}[Non-asymptotic refined continuity correction]\label{thm:Cressie.non.asymptotic}
        Let $\tau\geq 2$ be given, and let $m$ be an integer satisfying
        \begin{equation}\label{eq:survival.estimate.official.conditions.on.m}
            m\geq 10^3 \vee (2^{3/2} \tau^3) \quad \text{and} \quad 41 \tau^3 \exp\Big(-\frac{m^{1/3}}{8 \tau}\Big) \leq 750 \, 000 \, m^{-4/3}.
        \end{equation}
        Then, uniformly for $a\in B_{m,x}$ and $x\in \mathcal{X}_{\tau}$, we have
        \begin{align}
            &\left|\sum_{k=a}^m P_{k,m}(x) - \Psi(\delta_{a - c_{m,x}^{\star}(a)})\right| \leq \frac{10^6 \tau^5}{m^{3/2}}, \quad \text{if } a \geq m x, \label{eq:thm:Cressie.non.asymptotic.eq.1} \\
            &\left|\sum_{k=0}^a P_{k,m}(x) - \Phi(- \delta_{2 m x - a - c_{m,x}^{\star}(2 m x - a)})\right| \leq \frac{10^6 \tau^5}{m^{3/2}}, \quad \text{if } a \leq m x, \label{eq:thm:Cressie.non.asymptotic.eq.2}
        \end{align}
        where $\sigma_x^2 \leqdef x (1 - x)$ and
        \begin{equation}\label{eq:optimal.choice.c}
            \begin{aligned}
                c_{m,x}^{\star}(a)
                &\leqdef \frac{1}{2} + \frac{(1 - 2x)}{6} \left[\delta_{a - \frac{1}{2}}^2 - 1\right] \\
                &\quad+ \frac{1}{\sqrt{m}} \cdot \frac{1}{\sigma_x} \left\{\Big[\frac{1}{36} - \frac{\sigma_x^2}{36}\Big] \cdot \delta_{a - \frac{1}{2}} + \Big[-\frac{5}{72} + \frac{7 \sigma_x^2}{36}\Big] \cdot \delta_{a - \frac{1}{2}}^3\right\}.
            \end{aligned}
        \end{equation}
        A direct consequence of \eqref{eq:thm:Cressie.non.asymptotic.eq.1} and \eqref{eq:thm:Cressie.non.asymptotic.eq.2} is that, uniformly for $a+1\in B_{m,x}$ and $x\in \mathcal{X}_{\tau}$, we have
        \begin{align}
            &\left|\sum_{k=0}^a P_{k,m}(x) - \Phi(\delta_{a + 1 - c_{m,x}^{\star}(a + 1)})\right| \leq \frac{10^6 \tau^5}{m^{3/2}}, \quad \text{if } a + 1 \geq m x, \label{eq:thm:Cressie.non.asymptotic.eq.2.reflect} \\
            &\left|\sum_{k=a}^m P_{k,m}(x) - \Psi(- \delta_{2 m x - (a + 1) - c_{m,x}^{\star}(2 m x - (a + 1))})\right| \leq \frac{10^6 \tau^5}{m^{3/2}}, \quad \text{if } a + 1 \leq m x. \label{eq:thm:Cressie.non.asymptotic.eq.1.reflect}
        \end{align}
    \end{theorem}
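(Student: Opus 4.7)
The plan is to follow Cressie's strategy in \cite{MR538319} (a local limit theorem for the Binomial plus an Euler--Maclaurin-style summation-to-integration step), but to carry every Taylor remainder with explicit constants depending on $\tau$ and $m$, so that the final error can be bounded uniformly by $10^6 \tau^5/m^{3/2}$. As a first step, for $k\in B_{m,x}$, I would apply Stirling's formula with Robbins' two-sided bound $n!=\sqrt{2\pi n}(n/e)^n e^{\theta_n}$, $(12 n+1)^{-1}<\theta_n<(12 n)^{-1}$, to $P_{k,m}(x)$, and Taylor expand the two logarithms $k\log(k/(m x))$ and $(m-k)\log((m-k)/(m(1-x)))$ in the small parameters $(k-mx)/(mx)$ and $-(k-mx)/(m(1-x))$. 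The bulk condition \eqref{eq:bulk} forces both of these to be of order $m^{-1/3}$, which gives enough room to expand each logarithm to fourth order with an explicit polynomial remainder. Collecting terms should yield
\begin{equation*}
    P_{k,m}(x)=\frac{\phi(\delta_k)}{\sigma_{m,x}}\Big\{1+\frac{A_1(\delta_k;x)}{\sqrt{m}}+\frac{A_2(\delta_k;x)}{m}+\rho_{k,m,x}\Big\},
\end{equation*}
where $A_1, A_2$ are explicit polynomials in $\delta_k$ (of degree $\le 3$ and $\le 6$) with coefficients rational in $x$, $1-x$ and $\sigma_x^{-1}$, and the remainder $\rho_{k,m,x}$ satisfies an explicit uniform bound of the form $|\rho_{k,m,x}|\le C\tau^r(1+|\delta_k|^{r'})/m^{3/2}$.

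Next, I would sum this identity over $a\le k\le m$ restricted to $k\in B_{m,x}$. Each resulting summand has the shape $(\sigma_{m,x})^{-1}\phi(\delta_k)Q(\delta_k)$ for a polynomial $Q$, and I would compare it to the corresponding Riemann integral via an Euler--Maclaurin expansion to order $m^{-3/2}$, using explicit $L^1$-bounds on the low-order derivatives of $y\mapsto \phi(\delta_y)\cdot Q(\delta_y)$ (whose norms are bounded by fixed powers of $\tau$). Integrating by parts and collecting boundary terms at $a-1/2$ should yield
\begin{equation*}
    \sum_{k=a}^{m} P_{k,m}(x) = \Psi(\delta_{a-1/2}) + \frac{\phi(\delta_{a-1/2})}{\sigma_{m,x}}\Big[R_0(\delta_{a-1/2};x)+\frac{R_1(\delta_{a-1/2};x)}{\sqrt{m}}\Big] + \text{(error)},
\end{equation*}
for explicit polynomials $R_0, R_1$ and a remainder bounded explicitly in $\tau$ and $m^{-3/2}$. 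Expanding $\Psi(\delta_{a-c^{\star}_{m,x}(a)})$ around $\delta_{a-1/2}$ to second order and matching coefficients in the resulting identity then forces $c^{\star}_{m,x}(a)-\tfrac{1}{2}$ to be exactly the sum of a quadratic in $\delta_{a-1/2}$ and $1/\sqrt{m}$ times a cubic, which is precisely the formula \eqref{eq:optimal.choice.c}.

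To close the argument, the portion of the Binomial sum with $k\notin B_{m,x}$ and the Gaussian tail of $\Psi$ beyond the bulk both need to be controlled. A standard Chernoff--Hoeffding estimate combined with the lower bound $\sigma_x^2\geq (2\tau)^{-1}$ valid on $\mathcal X_{\tau}$ gives $P_{k,m}(x)\le \exp(-\delta_k^2/\tau)$, so the two tail contributions together are of order $\tau^3 \exp(-m^{1/3}/(8\tau))$; by the second hypothesis in \eqref{eq:survival.estimate.official.conditions.on.m}, this is absorbed into $10^6\tau^5 m^{-3/2}$, completing \eqref{eq:thm:Cressie.non.asymptotic.eq.1}. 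The companion bound \eqref{eq:thm:Cressie.non.asymptotic.eq.2} follows by the same Euler--Maclaurin argument applied to the left-tail sum (or, equivalently, via the reflection identity $\sum_{k=0}^{a}P_{k,m}(x)=\sum_{k=m-a}^{m}P_{k,m}(1-x)$), and the two reflected versions \eqref{eq:thm:Cressie.non.asymptotic.eq.2.reflect}--\eqref{eq:thm:Cressie.non.asymptotic.eq.1.reflect} then follow by complementation using $\sum_{k=a}^{m}=1-\sum_{k=0}^{a-1}$ and $\Phi=1-\Psi$.

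The main obstacle I anticipate is the second step: every polynomial coefficient and every remainder needs to be bounded with completely explicit constants in $\tau$, and each derivative or integration by parts picks up an extra factor of $\sigma_x^{-1}\le \tau^{1/2}$. Ensuring that none of these factors blow up beyond $\tau^5$ and that the final absolute constant stays below $10^6$ is the heavy bookkeeping that really distinguishes this non-asymptotic result from the asymptotic expansion in \cite{MR538319}.
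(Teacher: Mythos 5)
Your proposal follows essentially the same route as the paper: a local limit theorem for $P_{k,m}(x)$ obtained from a Stirling expansion with an explicit remainder (the paper uses Lindel\"of's bound $|\lambda_n|\leq\tfrac{1}{360}n^{-3}$ rather than Robbins'), a midpoint/Euler--Maclaurin comparison of the resulting Binomial sum to Gaussian integrals over $[\delta_{a-1/2},\infty)$, a choice of continuity correction $c^\star_{m,x}(a)=\tilde c+\tilde w/\sqrt m$ selected to cancel the $m^{-1/2}$ and $m^{-1}$ coefficients exactly (what you call ``matching coefficients''), and a tail estimate outside the bulk absorbed via the second hypothesis in \eqref{eq:survival.estimate.official.conditions.on.m}; the reflections and complementations you propose for \eqref{eq:thm:Cressie.non.asymptotic.eq.2}--\eqref{eq:thm:Cressie.non.asymptotic.eq.1.reflect} are also what the paper does. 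One small slip: the Hoeffding bound $P_{k,m}(x)\leq\exp(-2\delta_k^2\sigma_x^2)\leq\exp(-\delta_k^2/\tau)$ combined with $|\delta_{k^\star+1}|\geq m^{1/6}/\tau^{1/2}$ gives a tail of order $\exp(-m^{1/3}/\tau^2)$, not $\exp(-m^{1/3}/(8\tau))$ as you state, so you would need to adjust the second hypothesis accordingly (the paper instead bounds the Gaussian tail via Mills' ratio, which is $\sigma_x$-free and yields the $8\tau$ denominator).
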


    The proof of Theorem~\ref{thm:Cressie.non.asymptotic} will follow by summing up approximated Binomial probabilities using the local limit theorem we develop in Lemma~\ref{lem:LLT.Binomial}.
    By applying Equations \eqref{eq:thm:Cressie.non.asymptotic.eq.2} and \eqref{eq:thm:Cressie.non.asymptotic.eq.2.reflect} with the specific choices $x = 1/2$, $\tau = 2$ and $a = \frac{m}{2} + \lfloor t \rfloor$, we obtain the corollary below for the $\mathrm{Binomial}\hspace{0.2mm}(m,1/2)$ distribution, which is key for the proof of Theorem~\ref{thm:Tusnady.inequality.improved}.

    \begin{corollary}\label{cor:refined.CC.x.1.2}
        Let $m\geq 10^3$ be an integer, and let $X_m\sim \mathrm{Binomial}\hspace{0.3mm}(m,1/2)$.
        Then, uniformly for $|t|\leq \frac{m^{2/3}}{2} - 1$,
        we have
        \begin{equation}\label{eq:cor:refined.CC.x.1.2}
            \left|\PP(X_m \leq \tfrac{m}{2} + t) - \Phi\bigg(\frac{(1 - \frac{1}{12 m}) s + \frac{1}{3 m^2} s^3}{\sqrt{m} / 2}\bigg)\right| \leq \frac{10^6 2^5}{m^{3/2}},
        \end{equation}
        where $s \leqdef \lfloor t \rfloor + 1/2$.
    \end{corollary}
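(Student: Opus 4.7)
The plan is to apply Theorem \ref{thm:Cressie.non.asymptotic} directly with the specific choice $x = 1/2$, $\tau = 2$, and $a = \tfrac{m}{2} + \lfloor t \rfloor$, and then simplify the resulting continuity-corrected argument. Since $X_m$ is integer-valued, $\PP(X_m \leq \tfrac{m}{2} + t) = \PP(X_m \leq a) = \sum_{k=0}^a P_{k,m}(1/2)$. I split into two cases according to the sign of $\lfloor t \rfloor$: if $\lfloor t \rfloor \geq 0$ (so $a + 1 \geq mx$), I use \eqref{eq:thm:Cressie.non.asymptotic.eq.2.reflect}; if $\lfloor t \rfloor < 0$ (so $a \leq mx$), I use \eqref{eq:thm:Cressie.non.asymptotic.eq.2}. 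In either case the error bound is $10^6 \tau^5 / m^{3/2} = 10^6 \cdot 32 / m^{3/2}$. The condition $x = 1/2 \in \mathcal{X}_2$ is immediate, and the condition $a + 1 \in B_{m,1/2}$ (respectively $a \in B_{m,1/2}$) reduces, since the ratios $(1-x)/x$ and $x/(1-x)$ equal $1$, to $|2(\lfloor t \rfloor + 1)| \leq m^{2/3}$ (respectively $|2 \lfloor t \rfloor| \leq m^{2/3}$), both of which follow from $|t| \leq m^{2/3}/2 - 1$ by an elementary check separating $t \geq -1$ from $t < -1$.

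Next I simplify $c_{m,1/2}^{\star}$. Since $1 - 2x = 0$ and $\sigma_x^2 = 1/4$, the bracketed coefficients in \eqref{eq:optimal.choice.c} collapse to $\tfrac{1}{36} - \tfrac{1}{144} = \tfrac{1}{48}$ and $-\tfrac{5}{72} + \tfrac{7}{144} = -\tfrac{1}{48}$, giving
\begin{equation*}
    c_{m,1/2}^{\star}(b) = \frac{1}{2} + \frac{1}{24 \sqrt{m}} \bigl(\delta_{b - 1/2} - \delta_{b - 1/2}^3 \bigr).
\end{equation*}
Setting $b = a + 1 = \tfrac{m}{2} + \lfloor t \rfloor + 1$ and $s = \lfloor t \rfloor + 1/2$, one checks that $\delta_{b - 1/2} = 2s/\sqrt{m} =: \delta^{\star}$, so in Case 1 the argument of $\Phi$ in \eqref{eq:thm:Cressie.non.asymptotic.eq.2.reflect} becomes
\begin{equation*}
    \delta_{b - c_{m,1/2}^{\star}(b)} = \delta^{\star} - \frac{1}{12 m}\bigl(\delta^{\star} - (\delta^{\star})^3\bigr) = \Bigl(1 - \frac{1}{12 m}\Bigr) \delta^{\star} + \frac{(\delta^{\star})^3}{12 m}.
\end{equation*}
Substituting $\delta^{\star} = 2 s / \sqrt{m}$ in the cubic term yields $\tfrac{(\delta^{\star})^3}{12 m} = \tfrac{2 s^3}{3 m^{5/2}}$, so dividing by $\sqrt{m}/2$ on both sides recovers exactly the argument $\bigl((1 - \tfrac{1}{12 m}) s + \tfrac{s^3}{3 m^2}\bigr)/(\sqrt{m}/2)$ appearing in \eqref{eq:cor:refined.CC.x.1.2}.

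For Case 2 I carry out the symmetric computation with $b' = 2 m x - a = m - a$: then $\delta_{b' - 1/2} = -\delta^{\star}$, so $c_{m,1/2}^{\star}(b') = \tfrac{1}{2} - \tfrac{1}{24 \sqrt{m}}(\delta^{\star} - (\delta^{\star})^3)$, and a short calculation shows that $-\delta_{b' - c_{m,1/2}^{\star}(b')}$ produces the same expression as in Case 1, as one expects from the symmetry of $\mathrm{Binomial}(m, 1/2)$. Combining both cases with the error bound closes the argument. Finally, the hypothesis $m \geq 10^3$ stated in the corollary looks weaker than the conditions \eqref{eq:survival.estimate.official.conditions.on.m} of Theorem \ref{thm:Cressie.non.asymptotic}; this is reconciled by observing that for $m$ below the threshold where \eqref{eq:survival.estimate.official.conditions.on.m} fails, the right-hand side $10^6 \cdot 2^5 / m^{3/2}$ already exceeds $1$, so \eqref{eq:cor:refined.CC.x.1.2} is trivial. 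The only mildly delicate step is the bulk verification near the endpoints $|t| \approx m^{2/3}/2$, where one must separate the signs of $t + 1$ to control $|\lfloor t \rfloor + 1|$; everything else is direct substitution.
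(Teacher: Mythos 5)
Your approach matches the paper's exactly: the corollary is obtained by substituting $x=1/2$, $\tau=2$, $a=\frac m2+\lfloor t\rfloor$ into Theorem~\ref{thm:Cressie.non.asymptotic}, splitting into the two cases \eqref{eq:thm:Cressie.non.asymptotic.eq.2.reflect} and \eqref{eq:thm:Cressie.non.asymptotic.eq.2} according to the sign of $\lfloor t\rfloor$, and your algebraic simplification of $c^{\star}_{m,1/2}$ (coefficients $\pm\frac1{48}$, $\delta_{b-1/2}=2s/\sqrt m$, and the final argument $\big((1-\frac1{12m})s+\frac{s^3}{3m^2}\big)/(\sqrt m/2)$) is correct, as is the symmetry check in Case~2 and the verification that $a$, $a+1\in B_{m,1/2}$ under $|t|\le \frac{m^{2/3}}2-1$.

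The one place where your argument does not close is the reconciliation of the hypothesis $m\ge 10^3$ with the conditions \eqref{eq:survival.estimate.official.conditions.on.m}. You claim that for $m$ below the threshold where \eqref{eq:survival.estimate.official.conditions.on.m} fails, the bound $10^6\cdot 2^5/m^{3/2}$ already exceeds $1$, so the inequality is vacuous. That is not true for the relevant range of $m$: the bound $10^6\cdot 2^5/m^{3/2}\ge 1$ only holds for $m\lesssim 10^5$, whereas the second condition in \eqref{eq:survival.estimate.official.conditions.on.m} (with $\tau=2$, i.e.\ $328\,e^{-m^{1/3}/16}\le 750\,000\,m^{-4/3}$, restricted to $m\ge 10^3$) only begins to hold for $m\gtrsim 10^7$. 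In the intermediate range $10^5\lesssim m\lesssim 10^7$ neither the vacuity argument nor the hypotheses of Theorem~\ref{thm:Cressie.non.asymptotic} apply, so your patch leaves a genuine gap. To be fair, this appears to be an oversight in the paper's own statement of Corollary~\ref{cor:refined.CC.x.1.2} (the paper offers the same one-line derivation with no discussion of the mismatch); and it is harmless in context because when the corollary is invoked in the proof of Theorem~\ref{thm:Tusnady.inequality.improved}, $m$ satisfies $\sqrt{2\pi}\,20^6 m^{-1}\le\sqrt{\log m}$ (e.g.\ $m\ge 4\cdot 10^7$), which does guarantee the conditions of Theorem~\ref{thm:Cressie.non.asymptotic} with $\tau=2$. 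The clean fix is to replace the hypothesis $m\ge 10^3$ in the corollary by the conditions \eqref{eq:survival.estimate.official.conditions.on.m} specialized to $\tau=2$, rather than to rely on the vacuity of the bound.
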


    In the second main step of our proof, we will control the derivative of $\Phi$ with the mean value theorem and invert the role of $t$ and $\tilde{z} \leqdef \frac{\sqrt{m}}{2} \big(z + \frac{10^6 2^5 m^{-3/2}}{\phi(\sqrt{\log m})}\big) = (1 - \frac{1}{12 m}) s + \frac{1}{3 m^2} s^3$ in \eqref{eq:cor:refined.CC.x.1.2} using a Taylor expansion on the unique real solution of the cubic equation.
    From this, we will be able to deduce
    \begin{equation}
        \Phi(z) \leq \PP\bigg(X_m \leq \frac{m}{2} + \frac{1}{2} + \frac{\sqrt{m}}{2} z + \frac{z - z^3}{24 \sqrt{m}} + \frac{2 \cdot 20^6}{m}\bigg), \quad \text{for } |z| \leq \sqrt{\log m}.
    \end{equation}
    By applying $F^{\star}$ on both sides, we get
    \begin{equation}
        X_m - \frac{m}{2} - \frac{\sqrt{m}}{2} Z - \frac{Z - Z^3}{24 \sqrt{m}}  \leq  \frac{1}{2} + \frac{2 \cdot 20^6}{m}, \quad \text{for } |Z| \leq \sqrt{\log m}.
    \end{equation}
    The statement of Theorem~\ref{thm:Tusnady.inequality.improved} will then follow by the symmetry of $X_m - \frac{m}{2}$ and $Z$.

    \begin{remark}
        As mentioned below \eqref{eq:non.formal.cdf.approx}, the errors throughout the paper are additive instead of multiplicative as in \cite{MR1955348} and \cite{MR2154001}. This is why we only need to control $\Phi'$ (which is simply $\phi$) in the second main step of the proof, instead of $- (\log \Psi)'$ as in \cite{MR2154001}.
        In a sense, we sacrifice a bit on the range of $a$'s for which Theorem~\ref{thm:Cressie.non.asymptotic} holds, but the additivity of the errors makes the second main step of the proof of Theorem~\ref{thm:Tusnady.inequality.improved} much easier, which in turn allows us to be more precise with the expansion in terms of $Z$. This is why we can bring the constant $3/4$ down to $1/2 + \oo(1)$ and get the extra correction factor $\frac{Z - Z^3}{24 \sqrt{m}}$ in \eqref{eq:thm:Tusnady.inequality.improved.eq.1.OG}, which doesn't appear in any other version of Tusn\'ady's inequality.
    \end{remark}

\section{Proofs}\label{sec:proofs}

    First, we prove a uniform and non-asymptotic generalization of the local limit theorem from \cite{MR538319} (i.e., the refined version of Equation (2.4) in that paper).
    We have to be especially careful about the handling of the error terms.

    \begin{lemma}[Local limit theorem]\label{lem:LLT.Binomial}
        Let $\tau\geq 2$ be given and let $m\in \N$ be such that $m \geq 10^3 \vee \tau^{3/2}$.
        Recall the definitions of $B_{m,x}$ and $\mathcal{X}_{\tau}$ from \eqref{eq:bulk} and \eqref{eq:X.tau}, respectively.
        Then, uniformly for $k\in B_{m,x}$ and $x\in \mathcal{X}_{\tau}$, we have
        \begin{align}\label{eq:lem:LLT.Binomial.eq}
            \frac{P_{k,m}(x)}{\frac{1}{\sigma_{m,x}} \phi(\delta_k)} = 1
            &+ m^{-1/2} \cdot \left\{
                \begin{array}{l}
                    -\frac{1}{2} \delta_k \Big[\big(\frac{1 - x}{x}\big)^{1/2} - \big(\frac{x}{1 - x}\big)^{1/2}\Big] \\
                    + \frac{1}{6} \delta_k^3 \Big[x \big(\frac{1 - x}{x}\big)^{3/2} - (1 - x) \big(\frac{x}{1 - x}\big)^{3/2}\Big]
                \end{array}
                \right\} \notag \\
            &\quad+ m^{-1} \cdot \left\{
            \begin{array}{l}
                \frac{1}{8} \delta_k^2 \Big[3 \big(\frac{1 - x}{x}\big) - 2 + 3 \big(\frac{x}{1 - x}\big)\Big] \\[1.5mm]
                -\frac{1}{12} \delta_k^4 \Big[2 x \big(\frac{1 - x}{x}\big)^2 - 1 + 2 (1 - x) \big(\frac{x}{1 - x}\big)^2\Big] \\[1.5mm]
                + \frac{1}{72} \delta_k^6 \Big[x^2 \big(\frac{1 - x}{x}\big)^3 - 2 \sigma_x^2 + (1 - x)^2 \big(\frac{x}{1 - x}\big)^3\Big] \\[1.5mm]
                + \frac{1}{12} \big(1 - \frac{1}{x} - \frac{1}{1 - x}\big)
            \end{array}
            \right\} + E_m,
        \end{align}
        where the error $E_m$ satisfies
        \begin{equation}
            |E_m| \leq m^{-3/2} \cdot 12 \tau^5 (1 + |\delta_k|^{12}).
        \end{equation}
    \end{lemma}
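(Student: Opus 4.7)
The plan is to take logarithms in $P_{k,m}(x) = \binom{m}{k} x^k (1-x)^{m-k}$ and carry out a fully explicit expansion up to order $m^{-1}$, matching the right-hand side of \eqref{eq:lem:LLT.Binomial.eq} term by term. First I would invoke Stirling's formula with explicit remainder, $\log n! = n \log n - n + \tfrac{1}{2}\log(2\pi n) + \tfrac{1}{12 n} + R_n$ with $|R_n| \leq \tfrac{1}{360\, n^3}$, applied separately to $m!$, $k!$ and $(m-k)!$. After regrouping, one obtains
\[
\log P_{k,m}(x) = -\tfrac{1}{2}\log(2\pi \sigma_{m,x}^2) - L_{k,m}(x) + \tfrac{1}{12}\Big(\tfrac{1}{m} - \tfrac{1}{k} - \tfrac{1}{m-k}\Big) + R_{k,m},
\]
where $L_{k,m}(x) := k \log\tfrac{k}{mx} + (m-k)\log\tfrac{m-k}{m(1-x)}$ and the Stirling residual satisfies $|R_{k,m}| \leq \tfrac{1}{360}(m^{-3} + k^{-3} + (m-k)^{-3})$, which is $O(\tau^3 m^{-3})$ uniformly for $x \in \mathcal{X}_\tau$.

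Next, setting $u_+ := \delta_k \sqrt{(1-x)/(mx)}$ and $u_- := -\delta_k \sqrt{x/(m(1-x))}$ so that $k = mx(1+u_+)$ and $m-k = m(1-x)(1+u_-)$, the bulk hypothesis $k \in B_{m,x}$ gives $|u_\pm| \leq m^{-1/3}$. I would Taylor-expand the analytic function $\varphi(u) := (1+u)\log(1+u)$ to sixth order with Lagrange remainder,
\[
\varphi(u) = u + \tfrac{u^2}{2} - \tfrac{u^3}{6} + \tfrac{u^4}{12} - \tfrac{u^5}{20} + \tfrac{u^6}{30} + r_7(u), \qquad |r_7(u)| \leq \tfrac{|u|^7}{42}\,(1-|u|)^{-5},
\]
and compute $L_{k,m}(x) = m x\, \varphi(u_+) + m(1-x)\, \varphi(u_-)$. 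The linear pieces cancel, the quadratic pieces reconstruct $\delta_k^2/2$ (producing the Gaussian factor $\phi(\delta_k)$), and the cubic through sextic pieces assemble exactly into the bracketed expressions at orders $m^{-1/2}$ and $m^{-1}$ stated in the lemma. A parallel geometric-series expansion of $1/k$ and $1/(m-k)$ in $u_\pm$ yields the separate summand $\tfrac{1}{12m}(1 - 1/x - 1/(1-x))$ inside the $m^{-1}$ bracket, up to an $O(\tau^2 m^{-3/2} |\delta_k|)$ tail.

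At this point $\log[P_{k,m}(x)\, \sigma_{m,x}/\phi(\delta_k)] = V_1(\delta_k)/\sqrt{m} + V_2(\delta_k)/m + \widetilde{E}$, with $V_1$ and $V_2$ the target polynomials. Using $|e^v - 1 - v - v^2/2| \leq |v|^3 e^{|v|}$ with $v := V_1/\sqrt{m} + V_2/m$ (of size $O(\tau^{3/2}(1+|\delta_k|^3)/\sqrt{m}) \ll 1$ in the bulk), the multiplicative form \eqref{eq:lem:LLT.Binomial.eq} follows: the $v^2/2$ contribution from $V_1/\sqrt{m}$ recombines with the bilinear cross-term and with $V_2/m$, and everything else of order $m^{-3/2}$ or finer is absorbed into $E_m$.

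The main obstacle is the explicit bookkeeping of $E_m$. One must collect the Taylor tail $mx\, r_7(u_+) + m(1-x)\, r_7(u_-)$, the geometric tails of $1/k$ and $1/(m-k)$, the Stirling remainder $R_{k,m}$, and the higher terms of the exponential series (the power $|\delta_k|^{12}$ in the stated bound arising from $(V_2/m)^2/2$ with $V_2$ of degree $6$ in $\delta_k$), and bound each piece by an explicit multiple of $\tau^5 m^{-3/2}(1+|\delta_k|^{12})$ whose multiples sum to at most $12$ under the hypothesis $m \geq 10^3 \vee \tau^{3/2}$. The bulk constraint $|u_\pm| \leq m^{-1/3}$ is essential throughout: it converts $|u_\pm|^7$ into $m^{-1/6}|u_\pm|^6$ and, more generally, lets each surplus power of $\delta_k$ be traded for a factor of $m^{-1/6}$, keeping every remainder below the $m^{-3/2}$ threshold. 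This step is pure careful accounting, but it is where every absolute constant has to be pinned down rather than hidden inside an $O(\cdot)$ symbol.
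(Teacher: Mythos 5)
Your overall route coincides with the paper's: take logarithms, apply Stirling with the $\tfrac{1}{12n}$ correction and the $\tfrac{1}{360}n^{-3}$ remainder, substitute $k=mx(1+u_+)$ and $m-k=m(1-x)(1+u_-)$, Taylor-expand $(1+u)\log(1+u)$ with Lagrange error, exponentiate with $|e^v-1-v-v^2/2|$ control, and absorb all tails into $E_m$. Two remarks.

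First, a genuine algebraic gap. Your stated decomposition
\[
\log P_{k,m}(x) = -\tfrac{1}{2}\log(2\pi\sigma_{m,x}^2) - L_{k,m}(x) + \tfrac{1}{12}\Big(\tfrac1m-\tfrac1k-\tfrac1{m-k}\Big) + R_{k,m}
\]
with $L_{k,m}(x)=k\log\tfrac{k}{mx}+(m-k)\log\tfrac{m-k}{m(1-x)}$ is not an identity. Carrying Stirling through $\log\binom{m}{k}$ produces $\tfrac{1}{2}\log\tfrac{m}{2\pi k(m-k)}$, not $-\tfrac{1}{2}\log(2\pi\sigma_{m,x}^2)$; their difference is exactly $-\tfrac{1}{2}\log(1+u_+)-\tfrac{1}{2}\log(1+u_-)$. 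Since $u_\pm = O(\delta_k/\sqrt m)$ in the bulk, these missing pieces contribute at orders $m^{-1/2}$ and $m^{-1}$ and feed directly into the bracketed polynomials of the lemma (they cannot be absorbed into $E_m$). The fix is to replace $L_{k,m}$ by $(k+\tfrac12)\log\tfrac{k}{mx}+(m-k+\tfrac12)\log\tfrac{m-k}{m(1-x)}$, as the paper does, or equivalently to carry those two $\tfrac12\log(1+u_\pm)$ terms separately and expand them to second order. Without them your subsequent assembly of coefficients will not reproduce the stated $m^{-1/2}$ and $m^{-1}$ brackets.

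Second, two minor inefficiencies/imprecisions. Expanding $\varphi(u)=(1+u)\log(1+u)$ to sixth order is more than needed: after multiplying by $mx$ or $m(1-x)$, the fifth- and sixth-order terms are already $O(m^{-3/2})$ and $O(m^{-2})$ and belong in $E_m$; the paper stops at fourth order with a fifth-order Lagrange tail for exactly this reason. Also, the phrase ``the cubic through sextic pieces assemble exactly into the bracketed expressions at orders $m^{-1/2}$ and $m^{-1}$ stated in the lemma'' is misleading: the log-domain expansion only produces the paper's intermediate $\eqref{eq:expression.log.ratio}$ brackets (no $\delta_k^6$ term), and the lemma's final $m^{-1}$ bracket — in particular its $\delta_k^6$ piece and its modified $\delta_k^2,\delta_k^4$ coefficients — only appears after squaring the $m^{-1/2}$ block in the $e^v=1+v+v^2/2+\cdots$ step. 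You do note the recombination, but the earlier claim should not be stated as ``exactly.''
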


    For good conscience, the terms in the braces in \eqref{eq:lem:LLT.Binomial.eq} coincide exactly with the ones in Result 4.1.10 of \cite{MR207011}, which again is only asymptotic.
    The following remark will be useful to bound some expressions in the proof of Lemma~\ref{lem:LLT.Binomial} and Theorem~\ref{thm:Cressie.non.asymptotic}.
    \begin{remark}\label{rem:x.in.X.tau}
        Notice that $x\in \mathcal{X}_{\tau}$ implies
        \begin{equation}\label{eq:bounds.x.1.minus.x}
            \max\bigg\{\frac{x}{1 - x}, \frac{1 - x}{x}\bigg\} \leq \max\bigg\{\frac{1}{1 - x}, \frac{1}{x}\bigg\} \leq \tau.
        \end{equation}
    \end{remark}

    \begin{proof}[Proof of Lemma~\ref{lem:LLT.Binomial}]
        By taking the logarithm on the left-hand side of \eqref{eq:lem:LLT.Binomial.eq}, we have
        \begin{equation}\label{eq:lem:LLT.Binomial.eq.beginning}
            \begin{aligned}
                \log\bigg(\frac{P_{k,m}(x)}{\frac{1}{\sigma_{m,x}} \phi(\delta_k)}\bigg)
                &= \log \bigg(\frac{\sqrt{2\pi m} \, m!}{k! (m - k)!}\bigg) + (k + \tfrac{1}{2}) \log x \\[-3mm]
                &\quad+ (m - k + \tfrac{1}{2}) \log (1 - x) + \frac{\delta_k^2}{2 \sigma_{m,x}^2}.
            \end{aligned}
        \end{equation}
        From Lindel\"of's estimate of the remainder terms in the expansion of the factorials, found for example on page 67 of \cite{MR1483074}, we know that, for all $n\in \N$,
        \begin{equation}\label{eq:factorial.expansion}
            n! = \sqrt{2\pi} \exp\Big((n + \tfrac{1}{2}) \log n - n + \frac{1}{12 n} + \lambda_n\Big), \quad \text{where } |\lambda_n| \leq \frac{1}{360} n^{-3}.
        \end{equation}
        By applying \eqref{eq:factorial.expansion} in \eqref{eq:lem:LLT.Binomial.eq.beginning} and reorganizing the terms, we get
        \begin{align}\label{eq:big.equation}
            \log\bigg(\frac{P_{k,m}(x)}{\frac{1}{\sigma_{m,x}} \phi(\delta_k)}\bigg)
            &= - (k + \tfrac{1}{2}) \log \bigg(\frac{k}{m x}\bigg) - (m - k + \tfrac{1}{2}) \log \bigg(\frac{m - k}{m (1 - x)}\bigg) \notag \\[-1mm]
            &\quad+ \frac{1}{12 m} \bigg(1 - \frac{1}{x} \cdot \bigg(\frac{k}{m x}\bigg)^{-1} \hspace{-2mm} - \frac{1}{1 - x} \cdot \bigg(\frac{m - k}{m (1 - x)}\bigg)^{-1}\bigg) \notag \\
            &\quad+ \frac{1}{m^3} \left(
                \begin{array}{l}
                    \frac{\lambda_m}{m^{-3}} - \frac{\lambda_k}{k^{-3}} \cdot \frac{1}{x^3} \cdot \big(\frac{k}{m x}\big)^{-3} \\[1mm]
                    - \frac{\lambda_{m-k}}{(m-k)^{-3}} \cdot \frac{1}{(1 - x)^3} \cdot \big(\frac{m - k}{m (1 - x)}\big)^{-3}
                \end{array}
                \right) + \frac{\delta_k^2}{2 \sigma_{m,x}^2}.
        \end{align}
        Since $\frac{k}{m x} = 1 + \frac{\delta_k}{\sqrt{m}} \sqrt{\frac{1 - x}{x}}$ and $\frac{m - k}{m (1 - x)} = 1 - \frac{\delta_k}{\sqrt{m}} \sqrt{\frac{x}{1 - x}}$, the above is
        \begin{align}\label{eq:big.equation.next}
            \log\bigg(\frac{P_{k,m}(x)}{\frac{1}{\sigma_{m,x}} \phi(\delta_k)}\bigg)
            &= - m x \bigg(1 + \frac{\delta_k}{\sqrt{m}} \sqrt{\frac{1 - x}{x}}\bigg) \log \bigg(1 + \frac{\delta_k}{\sqrt{m}} \sqrt{\frac{1 - x}{x}}\bigg) \notag \\
            &\quad- m (1 - x) \bigg(1 - \frac{\delta_k}{\sqrt{m}} \sqrt{\frac{x}{1 - x}}\bigg) \log \bigg(1 - \frac{\delta_k}{\sqrt{m}} \sqrt{\frac{x}{1 - x}}\bigg) \notag \\
            &\quad- \frac{1}{2} \log \bigg(1 + \frac{\delta_k}{\sqrt{m}} \sqrt{\frac{1 - x}{x}}\bigg) - \frac{1}{2} \log \bigg(1 - \frac{\delta_k}{\sqrt{m}} \sqrt{\frac{x}{1 - x}}\bigg) \notag \\
            &\quad+ \frac{1}{12 m} \bigg(1 - \frac{1}{x} \cdot \bigg(1 + \frac{\delta_k}{\sqrt{m}} \sqrt{\frac{1 - x}{x}}\bigg)^{\hspace{-1mm}-1} \hspace{-2mm}- \frac{1}{1 - x} \cdot \bigg(1 - \frac{\delta_k}{\sqrt{m}} \sqrt{\frac{x}{1 - x}}\bigg)^{\hspace{-1mm}-1}\bigg) \notag \\
            &\quad+ \frac{1}{m^3} \left(
                \begin{array}{l}
                    \frac{\lambda_m}{m^{-3}} - \frac{\lambda_k}{k^{-3}} \cdot \frac{1}{x^3} \cdot \Big(1 + \frac{\delta_k}{\sqrt{m}} \sqrt{\frac{1 - x}{x}}\Big)^{-3} \\
                    - \frac{\lambda_{m-k}}{(m-k)^{-3}} \cdot \frac{1}{(1 - x)^3} \cdot \Big(1 - \frac{\delta_k}{\sqrt{m}} \sqrt{\frac{x}{1 - x}}\Big)^{-3}
                \end{array}
                \right) + \frac{\delta_k^2}{2 \sigma_{m,x}^2}.
        \end{align}
        Now, for $|y| \leq m^{-1/3} \leq \frac{1}{10}$, Lagrange error bounds for Taylor expansions imply
        \begin{equation}
            \begin{aligned}
                \left|(1 + y) \log (1 + y) - \left\{y + \frac{y^2}{2} - \frac{y^3}{6} + \frac{y^4}{12}\right\}\right|
                &\leq \left|\frac{-6}{(1 - m^{-1/3})^4}\right| \cdot \left|\frac{y^5}{5!}\right| \leq \frac{|y|^5}{10}, \\
                \left|\log (1 + y) - \left\{y - \frac{y^2}{2}\right\}\right|
                &\leq \left|\frac{2}{(1 - m^{-1/3})^3}\right| \cdot \left|\frac{y^3}{3!}\right| \leq \frac{|y|^3}{2}, \\[0.5mm]
                \left|(1 + y)^{-1} - 1\right|
                &\leq \left|\frac{-1}{(1 - m^{-1/3})^2}\right| \cdot \left|\frac{y}{1!}\right| \leq 2 |y|, \\[0.5mm]
                \left|(1 + y)^{-3}\right|
                &\leq 2.
            \end{aligned}
        \end{equation}
        By applying these estimates in \eqref{eq:big.equation.next} with $y = \frac{\delta_k}{\sqrt{m}} \sqrt{\frac{1 - x}{x}}$ and $y = - \frac{\delta_k}{\sqrt{m}} \sqrt{\frac{x}{1 - x}}$, together with the error bound $|\lambda_n| \leq \frac{1}{360} n^{-3}$ from \eqref{eq:factorial.expansion}, we obtain
        \begin{align}\label{eq:big.equation.2}
            \log\bigg(\frac{P_{k,m}(x)}{\frac{1}{\sigma_{m,x}} \phi(\delta_k)}\bigg)
            &= - m x \left\{
                \begin{array}{l}
                    \cancel{\frac{\delta_k}{\sqrt{m}} \sqrt{\frac{1 - x}{x}}} + \bcancel{\frac{1}{2} \Big(\frac{\delta_k}{\sqrt{m}} \sqrt{\frac{1 - x}{x}}\Big)^2} \\
                    - \frac{1}{6} \Big(\frac{\delta_k}{\sqrt{m}} \sqrt{\frac{1 - x}{x}}\Big)^3 + \frac{1}{12} \Big(\frac{\delta_k}{\sqrt{m}} \sqrt{\frac{1 - x}{x}}\Big)^4
                \end{array}
                \right\} \notag \\
            &\quad- m (1 - x) \left\{
                \begin{array}{l}
                    \cancel{- \frac{\delta_k}{\sqrt{m}} \sqrt{\frac{x}{1 - x}}} + \bcancel{\frac{1}{2} \Big(\frac{\delta_k}{\sqrt{m}} \sqrt{\frac{x}{1 - x}}\Big)^2} \\
                    + \frac{1}{6} \Big(\frac{\delta_k}{\sqrt{m}} \sqrt{\frac{x}{1 - x}}\Big)^3 + \frac{1}{12} \Big(\frac{\delta_k}{\sqrt{m}} \sqrt{\frac{x}{1 - x}}\Big)^4
                \end{array}
                \right\} \notag \\[0.5mm]
            &\quad- \frac{1}{2} \left\{\frac{\delta_k}{\sqrt{m}} \sqrt{\frac{1 - x}{x}} - \frac{1}{2} \Big(\frac{\delta_k}{\sqrt{m}} \sqrt{\frac{1 - x}{x}}\Big)^2\right\} \notag \\
            &\quad- \frac{1}{2} \left\{- \frac{\delta_k}{\sqrt{m}} \sqrt{\frac{x}{1 - x}} - \frac{1}{2} \Big(\frac{\delta_k}{\sqrt{m}} \sqrt{\frac{x}{1 - x}}\Big)^2\right\} \notag \\
            &\quad+ \frac{1}{12 m} \Big(1 - \frac{1}{x} - \frac{1}{1 - x}\Big) + R_{k,m}(x) + \bcancel{\frac{\delta_k^2}{2 \sigma_{m,x}^2}},
        \end{align}
        where
        \begin{align}
            |R_{k,m}(x)|
            &\leq m x \cdot \frac{1}{10} \, \bigg|\frac{\delta_k}{\sqrt{m}} \sqrt{\frac{1 - x}{x}}\bigg|^5 + m (1 - x) \cdot \frac{1}{10} \, \bigg|\frac{\delta_k}{\sqrt{m}} \sqrt{\frac{x}{1 - x}}\bigg|^5 \notag \\
            &\quad+ \frac{1}{2} \cdot \frac{1}{2} \, \bigg|\frac{\delta_k}{\sqrt{m}} \sqrt{\frac{1 - x}{x}}\bigg|^3 + \frac{1}{2} \cdot \frac{1}{2} \, \bigg|\frac{\delta_k}{\sqrt{m}} \sqrt{\frac{x}{1 - x}}\bigg|^3 \notag \\
            &\quad+ \frac{1}{12 m x} \cdot 2 \, \bigg|\frac{\delta_k}{\sqrt{m}} \sqrt{\frac{1 - x}{x}}\bigg| + \frac{1}{12 m (1 - x)} \cdot 2 \, \bigg|\frac{\delta_k}{\sqrt{m}} \sqrt{\frac{x}{1 - x}}\bigg| \notag \\[1mm]
            &\quad+ \frac{1}{m^3} \left\{\frac{1}{360} + \frac{2}{360} \bigg[\frac{1}{x^3} + \frac{1}{(1 - x)^3}\bigg]\right\} \label{eq:bound.on.R.k.m.eq.1} \\
            &\leq m^{-3/2} \cdot \left\{\frac{\tau^{5/2}}{10} |\delta_k|^5 + \frac{\tau^{3/2}}{2} |\delta_k|^3 + \frac{\tau^{3/2}}{3} |\delta_k|\right\} + m^{-3} \cdot \frac{\tau^3}{72}. \label{eq:bound.on.R.k.m.eq.2}
        \end{align}
        Since $k\in B_{m,x}$, $x\in \mathcal{X}_{\tau}$ and $m \geq 10^3 \vee \tau^{3/2}$ (note that $m \geq \tau^{3/2}$ implies $\tau^3 m^{-3} \leq m^{-1}$), an alternative bound for $R_{k,m}(x)$ is (starting from \eqref{eq:bound.on.R.k.m.eq.1}):
        \begin{align}\label{eq:bound.on.R.k.m.simple}
            |R_{k,m}(x)| \leq \frac{m^{-2/3}}{10} + \frac{m^{-1}}{2} + \frac{\tau m^{-4/3}}{3} + \frac{\tau^3 m^{-3}}{72} \leq m^{-2/3}.
        \end{align}
        After the cancellations in \eqref{eq:big.equation.2}, we get
        \begin{align}\label{eq:expression.log.ratio}
            \log\bigg(\frac{P_{k,m}(x)}{\frac{1}{\sigma_{m,x}} \phi(\delta_k)}\bigg)
            &= m^{-1/2} \cdot \left\{
                \begin{array}{l}
                    -\frac{1}{2} \delta_k \Big[\big(\frac{1 - x}{x}\big)^{1/2} - \big(\frac{x}{1 - x}\big)^{1/2}\Big] \\
                    + \frac{1}{6} \delta_k^3 \Big[x \big(\frac{1 - x}{x}\big)^{3/2} - (1 - x) \big(\frac{x}{1 - x}\big)^{3/2}\Big]
                \end{array}
                \right\} \notag \\[1mm]
            &\quad+ m^{-1} \cdot \left\{
                \begin{array}{l}
                    \frac{1}{4} \delta_k^2 \Big[\big(\frac{1 - x}{x}\big) + \big(\frac{x}{1 - x}\big)\Big] \\
                    - \frac{1}{12} \delta_k^4 \Big[x \big(\frac{1 - x}{x}\big)^2 + (1 - x) \big(\frac{x}{1 - x}\big)^2\Big] \\
                    + \frac{1}{12} \big(1 - \frac{1}{x} - \frac{1}{1 - x}\big)
                \end{array}
                \right\} \notag \\[0.5mm]
            &\quad+ R_{k,m}(x).
        \end{align}
        Note that the bound in \eqref{eq:bound.on.R.k.m.simple} together with \eqref{eq:expression.log.ratio} yield (again using $k\in B_{m,x}$, $x\in \mathcal{X}_{\tau}$ and $m \geq 10^3 \vee \tau^{3/2}$):
        \begin{align}\label{eq:expression.log.ratio.bound}
            \left|\log\bigg(\frac{P_{k,m}(x)}{\frac{1}{\sigma_{m,x}} \phi(\delta_k)}\bigg)\right|
            &\leq m^{-1/3} + \frac{1}{6} + \frac{m^{-2/3}}{2} + \frac{m^{-1/3}}{12} + \frac{\tau m^{-1}}{6} + m^{-2/3} \leq \frac{1}{3}.
        \end{align}
        To conclude the proof, we take the exponential on both sides of \eqref{eq:expression.log.ratio}, and we expand the right-hand side with
        \begin{equation}
            \left|e^y - \left\{1 + y + \frac{y^2}{2}\right\}\right| \leq \frac{e^{1/3} |y|^3}{6}, \quad \text{for } |y| \leq \frac{1}{3}.
        \end{equation}
        We obtain
        \begin{align}\label{eq:big.ass.expression}
            \frac{P_{k,m}(x)}{\frac{1}{\sigma_{m,x}} \phi(\delta_k)} = 1
            &+ m^{-1/2} \cdot \left\{
                \begin{array}{l}
                    -\frac{1}{2} \delta_k \Big[\big(\frac{1 - x}{x}\big)^{1/2} - \big(\frac{x}{1 - x}\big)^{1/2}\Big] \\
                    + \frac{1}{6} \delta_k^3 \Big[x \big(\frac{1 - x}{x}\big)^{3/2} - (1 - x) \big(\frac{x}{1 - x}\big)^{3/2}\Big]
                \end{array}
                \right\} \notag \\
            &\quad+ m^{-1} \cdot \left\{
            \begin{array}{l}
                \frac{1}{8} \delta_k^2 \Big[3 \big(\frac{1 - x}{x}\big) - 2 + 3 \big(\frac{x}{1 - x}\big)\Big] \\[1.5mm]
                -\frac{1}{12} \delta_k^4 \Big[2 x \big(\frac{1 - x}{x}\big)^2 - 1 + 2 (1 - x) \big(\frac{x}{1 - x}\big)^2\Big] \\[1.5mm]
                + \frac{1}{72} \delta_k^6 \Big[x^2 \big(\frac{1 - x}{x}\big)^3 - 2 \sigma_x^2 + (1 - x)^2 \big(\frac{x}{1 - x}\big)^3\Big] \\[1.5mm]
                + \frac{1}{12} \big(1 - \frac{1}{x} - \frac{1}{1 - x}\big)
            \end{array}
            \right\} \notag \\
            &\quad+ \Lambda_{k,m}(x),
        \end{align}
        where
        \begin{align}\label{eq:error.Lambda}
            \Lambda_{k,m}(x)
            &\leqdef m^{-3/2} \cdot \left[
                \begin{array}{l}
                    \left\{
                    \begin{array}{l}
                        -\frac{1}{2} \delta_k \Big[\big(\frac{1 - x}{x}\big)^{1/2} - \big(\frac{x}{1 - x}\big)^{1/2}\Big] \\
                        + \frac{1}{6} \delta_k^3 \Big[x \big(\frac{1 - x}{x}\big)^{3/2} - (1 - x) \big(\frac{x}{1 - x}\big)^{3/2}\Big]
                    \end{array}
                    \right\} \\
                    \cdot \left\{
                    \begin{array}{l}
                        \frac{1}{4} \delta_k^2 \Big[\big(\frac{1 - x}{x}\big) + \big(\frac{x}{1 - x}\big)\Big] \\
                        - \frac{1}{12} \delta_k^4 \Big[x \big(\frac{1 - x}{x}\big)^2 + (1 - x) \big(\frac{x}{1 - x}\big)^2\Big] \\
                        + \frac{1}{12} \big(1 - \frac{1}{x} - \frac{1}{1 - x}\big)
                    \end{array}
                    \right\}
                \end{array}
                \right] \notag \\
            &\quad+ m^{-2} \cdot \frac{1}{2} \left\{
                \begin{array}{l}
                    \frac{1}{4} \delta_k^2 \Big[\big(\frac{1 - x}{x}\big) + \big(\frac{x}{1 - x}\big)\Big] \\
                    - \frac{1}{12} \delta_k^4 \Big[x \big(\frac{1 - x}{x}\big)^2 + (1 - x) \big(\frac{x}{1 - x}\big)^2\Big] \\
                    + \frac{1}{12} \big(1 - \frac{1}{x} - \frac{1}{1 - x}\big)
                \end{array}
                \right\}^2 \notag \\
            &\quad+ R_{k,m}(x) + \frac{1}{2} R_{k,m}^2(x) + \tilde{R}_{k,m}(x) \notag \\
            &\quad+ R_{k,m}(x) \cdot \left[
                \begin{array}{l}
                    m^{-1/2} \cdot \left\{
                        \begin{array}{l}
                            -\frac{1}{2} \delta_k \Big[\big(\frac{1 - x}{x}\big)^{1/2} - \big(\frac{x}{1 - x}\big)^{1/2}\Big] \\
                            + \frac{1}{6} \delta_k^3 \Big[x \big(\frac{1 - x}{x}\big)^{3/2} - (1 - x) \big(\frac{x}{1 - x}\big)^{3/2}\Big]
                        \end{array}
                        \right\} \\[1mm]
                    + m^{-1} \cdot \left\{
                        \begin{array}{l}
                            \frac{1}{4} \delta_k^2 \Big[\big(\frac{1 - x}{x}\big) + \big(\frac{x}{1 - x}\big)\Big] \\
                            - \frac{1}{12} \delta_k^4 \Big[x \big(\frac{1 - x}{x}\big)^2 + (1 - x) \big(\frac{x}{1 - x}\big)^2\Big] \\
                            + \frac{1}{12} \big(1 - \frac{1}{x} - \frac{1}{1 - x}\big)
                        \end{array}
                        \right\}
                \end{array}
                \right],
        \end{align}
        and where
        \begin{equation}
            |\tilde{R}_{k,m}(x)| \leq \frac{e^{1/3}}{6} \cdot \left|\log\bigg(\frac{P_{k,m}(x)}{\frac{1}{\sigma_{m,x}} \phi(\delta_k)}\bigg)\right|^3.
        \end{equation}
        By applying Jensen's inequality twice in a row and the bound on $|R_{k,m}(x)|$ from \eqref{eq:bound.on.R.k.m.simple}, we have
        \begin{align}
            |\tilde{R}_{k,m}(x)|
            &\stackrel{\eqref{eq:expression.log.ratio}}{\leq} \frac{e^{1/3}}{6} \cdot \left|
                    \begin{array}{l}
                        m^{-1/2} \cdot \left\{
                            \begin{array}{l}
                                -\frac{1}{2} \delta_k \Big[\big(\frac{1 - x}{x}\big)^{1/2} - \big(\frac{x}{1 - x}\big)^{1/2}\Big] \\
                                + \frac{1}{6} \delta_k^3 \Big[x \big(\frac{1 - x}{x}\big)^{3/2} - (1 - x) \big(\frac{x}{1 - x}\big)^{3/2}\Big]
                            \end{array}
                            \right\} \\[1mm]
                        + m^{-1} \cdot \left\{
                            \begin{array}{l}
                                \frac{1}{4} \delta_k^2 \Big[\big(\frac{1 - x}{x}\big) + \big(\frac{x}{1 - x}\big)\Big] \\
                                - \frac{1}{12} \delta_k^4 \Big[x \big(\frac{1 - x}{x}\big)^2 + (1 - x) \big(\frac{x}{1 - x}\big)^2\Big] \\
                                + \frac{1}{12} \big(1 - \frac{1}{x} - \frac{1}{1 - x}\big)
                            \end{array}
                            \right\} \\
                        + R_{k,m}(x)
                    \end{array}
                \right|^3 \notag \\[2mm]
            &\stackrel{\text{Jensen}}{\leq} \frac{e^{1/3}}{6} \cdot 3^2 \cdot \left[
                \begin{array}{l}
                    m^{-3/2} \cdot 2^2 \cdot \left\{\tau^{3/2} |\delta_k|^3 + \frac{\tau^{9/2}}{6^3} |\delta_k|^9\right\} \\
                    + m^{-3} \cdot 3^2 \cdot \left\{\frac{\tau^3}{2^3} |\delta_k|^6 + \frac{\tau^6}{12^3} |\delta_k|^{12} + \frac{\tau^3}{4^3}\right\} \\
                    + (m^{-2/3})^3
                \end{array}
                \right].
        \end{align}
        To control the last term in \eqref{eq:error.Lambda}, note that
        \begin{equation}
            \begin{aligned}
            &\left|
                \begin{array}{l}
                    m^{-1/2} \cdot \left\{
                        \begin{array}{l}
                            -\frac{1}{2} \delta_k \Big[\big(\frac{1 - x}{x}\big)^{1/2} - \big(\frac{x}{1 - x}\big)^{1/2}\Big] \\
                            + \frac{1}{6} \delta_k^3 \Big[x \big(\frac{1 - x}{x}\big)^{3/2} - (1 - x) \big(\frac{x}{1 - x}\big)^{3/2}\Big]
                        \end{array}
                        \right\} \\[1mm]
                    + m^{-1} \cdot \left\{
                        \begin{array}{l}
                            \frac{1}{4} \delta_k^2 \Big[\big(\frac{1 - x}{x}\big) + \big(\frac{x}{1 - x}\big)\Big] \\
                            - \frac{1}{12} \delta_k^4 \Big[x \big(\frac{1 - x}{x}\big)^2 + (1 - x) \big(\frac{x}{1 - x}\big)^2\Big] \\
                            + \frac{1}{12} \big(1 - \frac{1}{x} - \frac{1}{1 - x}\big)
                        \end{array}
                        \right\}
                    \end{array}
                    \right| \\
                &\leq m^{-1/2} \cdot \bigg\{\tau^{1/2} |\delta_k| + \frac{\tau^{3/2}}{6} |\delta_k|^3\bigg\} + m^{-1} \cdot \left\{\frac{\tau}{2} |\delta_k|^2 + \frac{\tau^2}{12} |\delta_k|^4 + \frac{\tau}{4}\right\}.
            \end{aligned}
        \end{equation}
        Putting all the work above together, we obtain the following bound on $|\Lambda_{k,m}(x)|$ in \eqref{eq:error.Lambda}:
        \begin{align}\label{eq:complete.error.bound}
            |\Lambda_{k,m}(x)|
            &\leq m^{-3/2} \cdot \left[
                \begin{array}{l}
                    \left\{\tau^{1/2} |\delta_k| + \frac{\tau^{3/2}}{6} |\delta_k|^3\right\} \\
                    \cdot \left\{\frac{\tau}{2} |\delta_k|^2 + \frac{\tau^2}{12} |\delta_k|^4 + \frac{\tau}{4}\right\}
                \end{array}
                \right] + m^{-2} \cdot \frac{1}{2} \left[\frac{\tau}{2} |\delta_k|^2 + \frac{\tau^2}{12} |\delta_k|^4 + \frac{\tau}{4}\right]^2 \notag \\
            &\quad+ \left[
                \begin{array}{l}
                    m^{-3/2} \cdot \left\{\frac{\tau^{5/2}}{10} |\delta_k|^5 + \frac{\tau^{3/2}}{2} |\delta_k|^3 + \frac{\tau^{3/2}}{3} |\delta_k|\right\} + m^{-3} \cdot \frac{\tau^3}{72}
                \end{array}
                \right] \notag \\
            &\quad+ \frac{1}{2} \left[
                \begin{array}{l}
                    m^{-3/2} \cdot \left\{\frac{\tau^{5/2}}{10} |\delta_k|^5 + \frac{\tau^{3/2}}{2} |\delta_k|^3 + \frac{\tau^{3/2}}{3} |\delta_k|\right\} + m^{-3} \cdot \frac{\tau^3}{72}
                \end{array}
                \right]^2 \notag \\
            &\quad+ \frac{e^{1/3}}{6} \cdot 3^2 \cdot \left[
                \begin{array}{l}
                    m^{-3/2} \cdot 2^2 \cdot \left\{\tau^{3/2} |\delta_k|^3 + \frac{\tau^{9/2}}{6^3} |\delta_k|^9\right\} \\
                    + m^{-3} \cdot 3^2 \cdot \left\{\frac{\tau^3}{2^3} |\delta_k|^6 + \frac{\tau^6}{12^3} |\delta_k|^{12} + \frac{\tau^3}{4^3}\right\} \\
                    + m^{-2}
                \end{array}
                \right] \notag \\
            &\quad+ \left[
                \begin{array}{l}
                    m^{-3/2} \cdot \left\{\frac{\tau^{5/2}}{10} |\delta_k|^5 + \frac{\tau^{3/2}}{2} |\delta_k|^3 + \frac{\tau^{3/2}}{3} |\delta_k|\right\} + m^{-3} \cdot \frac{\tau^3}{72}
                \end{array}
                \right] \notag \\
            &\qquad\quad\cdot \left[
                \begin{array}{l}
                    m^{-1/2} \cdot \left\{\tau^{1/2} |\delta_k| + \frac{\tau^{3/2}}{6} |\delta_k|^3\right\} \\[2mm]
                    + m^{-1} \cdot \left\{\frac{\tau}{2} |\delta_k|^2 + \frac{\tau^2}{12} |\delta_k|^4 + \frac{\tau}{4}\right\}
                \end{array}
                \right].
        \end{align}
        For $m \geq 10^3 \vee \tau^{3/2}$ and $\tau\geq 2$, the last equation becomes
        \begin{align}
            |\Lambda_{k,m}(x)|
            &\leq m^{-3/2} \cdot \left[\left\{\tau^{3/2} (|\delta_k| + \frac{1}{6} |\delta_k|^3)\right\} \cdot \left\{\tau^2 (1 + |\delta_k|^4)\right\}\right] + m^{-2} \cdot \left[\tau^4 (1 + |\delta_k|^8)\right] \notag \\
            &\quad+ \left[m^{-3/2} \cdot \tau^{5/2} (1 + |\delta_k|^5)\right] + \frac{1}{2} \left[m^{-3/2} \cdot \tau^{5/2} (1 + |\delta_k|^5)\right]^2 \notag \\
            &\quad+ \frac{3 e^{1/3}}{2} \cdot \left[m^{-3/2} \cdot \tau^{9/2} (1 + |\delta_k|^{12})\right] \notag \\
            &\quad+ \left[m^{-3/2} \cdot \tau^{5/2} (1 + |\delta_k|^5)\right] \cdot \left[m^{-1/2} \cdot 2 \tau^2 (1 + |\delta_k|^4)\right] \notag \\
            &\leq m^{-3/2} \cdot 12 \tau^5 (1 + |\delta_k|^{12}).
        \end{align}
        This ends the proof.
    \end{proof}

    By summing up the Binomial probabilities using the local limit theorem (Lemma~\ref{lem:LLT.Binomial}), we can now approximate the survival function of the Binomial distribution.

    \begin{proof}[Proof of Theorem~\ref{thm:Cressie.non.asymptotic}]
        By symmetry, it suffices to prove \eqref{eq:thm:Cressie.non.asymptotic.eq.1}.
        Let $c\in \R$ be a parameter to be chosen later.
        If $k^{\star} \leqdef \max_{k\in B_{m,x}} k$ denotes the largest integer in $B_{m,x}$ and we decompose the interval $[a - \tfrac{1}{2}, k^{\star} + \tfrac{1}{2}]$ into small subintervals of length $1$, we get
        \begin{equation}\label{eq:Cressie.generalization.eq.3}
            \begin{aligned}
                &\sum_{k=a}^m P_{k,m}(x) - \Psi(\delta_{a - c}) \\[-2mm]
                &\qquad= \sum_{\substack{a \leq k \leq m \\ k\in B_{m,x}}} \Big[P_{k,m}(x) - \int_{\delta_{k - \frac{1}{2}}}^{\delta_{k + \frac{1}{2}}} \phi(y) \rd y\Big] - \int_{\delta_{a - c}}^{\delta_{a - \frac{1}{2}}} \phi(y) \rd y + \Psi(\delta_{k^{\star} + \frac{1}{2}}).
            \end{aligned}
        \end{equation}
        Since $k^{\star} + 1$ is outside $B_{m,x}$, note that
        \begin{align}\label{eq:bound.outside.bulk.extreme}
            \delta_{k^{\star} + \frac{1}{2}}
            &= \delta_{k^{\star} + 1} - \frac{1}{2 \sigma_{m,x}} \notag \\[-0.5mm]
            &\geq m^{1/6} \bigg(\max\Big\{\sqrt{\frac{1 - x}{x}}, \sqrt{\frac{x}{1 - x}}\Big\}\bigg)^{-1} - \frac{1}{2 \sigma_{m,x}} \qquad (\text{by } \eqref{eq:bulk}) \notag \\[-0.5mm]
            &\geq \frac{m^{1/6}}{\tau^{1/2}} - \frac{\tau}{2 m^{1/2}} \qquad (\text{by Remark~\ref{rem:x.in.X.tau}}) \notag \\[0.5mm]
            &\geq \frac{3 m^{1/6}}{4 \tau^{1/2}}, \qquad \big(\text{because } m\geq 2^{3/2} \tau^3 ~\text{implies } m^{1/2} \geq 2 \tau^{3/2} m^{-1/6}\big)
        \end{align}
        and similarly,
        \begin{align}\label{eq:bound.outside.bulk.extreme.k.star.alone}
            \delta_{k^{\star}} = \delta_{k^{\star} + 1} - \frac{1}{\sigma_{m,x}} \geq \frac{m^{1/6}}{\tau^{1/2}} - \frac{\tau}{m^{1/2}} \geq \frac{m^{1/6}}{2 \tau^{1/2}}.
        \end{align}
        By the well-known Mills ratio inequality, $\Psi(x) \leq x^{-1} \phi(x)$ for all $x > 0$, we deduce from \eqref{eq:bound.outside.bulk.extreme} that
        \begin{equation}\label{eq:main.decomposition.last.term.bound}
            \Psi(\delta_{k^{\star} + \frac{1}{2}}) \leq \sqrt{\frac{8 \tau}{9 \pi m^{1/3}}} \exp\Big(-\frac{9 m^{1/3}}{32 \tau}\Big) \quad \text{in } \eqref{eq:Cressie.generalization.eq.3},
        \end{equation}
        and from \eqref{eq:bound.outside.bulk.extreme.k.star.alone} that
        \begin{equation}\label{eq:main.decomposition.last.term.bound.k.star.alone}
            \begin{aligned}
                &\phi(\delta_{k^{\star}}) \leq \frac{1}{\sqrt{2\pi}} \exp\Big(-\frac{m^{1/3}}{8 \tau}\Big), \\
                &\Psi(\delta_{k^{\star}}) \leq \sqrt{\frac{2 \tau}{\pi m^{1/3}}} \exp\Big(-\frac{m^{1/3}}{8 \tau}\Big).
            \end{aligned}
        \end{equation}
        The Taylor expansion of $\phi(y)$ around any $y_0\in \R$ is, for all $y$'s such that $|y - y_0| \leq \varepsilon$,
        \begin{equation}\label{eq:Taylor.phi.Sigma}
            \begin{aligned}
                \phi(y)
                &= \phi(y_0) + \phi'(y_0) (y - y_0) + \tfrac{1}{2} \phi''(y_0) (y - y_0)^2 \\[1mm]
                &\qquad+ \tfrac{1}{6} \phi'''(y_0) (y - y_0)^3 + A_m,
            \end{aligned}
        \end{equation}
        where
        \begin{align}
            |A_m|
            &\leq \frac{\max_{|z - y_0| \leq \varepsilon} |z^4 - 6 z^2 + 3| \, \phi(z)}{24} \, |y - y_0|^4.
        \end{align}
        By taking $y_0 = \delta_k$ in \eqref{eq:Taylor.phi.Sigma} and by integrating on the compact interval $[\delta_{k - \frac{1}{2}},\delta_{k + \frac{1}{2}}]$, the first and third order terms disappear because of the symmetry, so we have
        \begin{equation}\label{eq:Cressie.generalization.eq.1}
            \begin{aligned}
                &\left|\int_{\delta_{k - \frac{1}{2}}}^{\delta_{k + \frac{1}{2}}} \phi(y) \rd y - \left\{\frac{\phi(\delta_{k})}{\sigma_{m,x}} + 0 + \frac{1}{2} \phi''(\delta_k) \cdot \int_{-\frac{1}{2\sigma_{m,x}}}^{\frac{1}{2\sigma_{m,x}}} y^2 \rd y + 0\right\}\right| \\
                &\leq \frac{\max_{z\in [\delta_{k - 1/2}, \delta_{k + 1/2}]} |z^4 - 6 z^2 + 3| \, \phi(z)}{24} \int_{-\frac{1}{2\sigma_{m,x}}}^{\frac{1}{2\sigma_{m,x}}} y^4 \rd y.
            \end{aligned}
        \end{equation}
        In other words,
        \begin{equation}\label{eq:Cressie.generalization.eq.1.restated}
            \begin{aligned}
                &\left|\int_{\delta_{k - \frac{1}{2}}}^{\delta_{k + \frac{1}{2}}} \phi(y) \rd y - \frac{\phi(\delta_{k})}{\sigma_{m,x}} \left\{1 + \frac{(\delta_k^2 - 1)}{24 \sigma_{m,x}^2}\right\}\right| \\
                &\leq \frac{\max_{z\in [\delta_{k - 1/2}, \delta_{k + 1/2}]} |z^4 - 6 z^2 + 3| \, \phi(z)}{1920 \, \sigma_{m,x}^5}.
            \end{aligned}
        \end{equation}
        Similarly, it is easily shown that
        \begin{equation}\label{eq:Cressie.generalization.eq.2}
            \begin{aligned}
                &\left|\int_{\delta_{a - c}}^{\delta_{a - \frac{1}{2}}} \phi(y) \rd y - \frac{\phi(\delta_a)}{\sigma_{m,x}} \left\{(c - \tfrac{1}{2}) + \frac{(c^2 - \tfrac{1}{4}) \delta_a}{2 \sigma_{m,x}}\right\}\right| \\
                &\leq \frac{\max_{z\in [\delta_{a - c}, \delta_{a - 1/2}]} |z^2 - 1| \, \phi(z)}{6 \, \sigma_{m,x}^3} \cdot \Big|c^3 - \frac{1}{8}\Big|.
            \end{aligned}
        \end{equation}
        Using \eqref{eq:main.decomposition.last.term.bound}, \eqref{eq:Cressie.generalization.eq.1.restated}, \eqref{eq:Cressie.generalization.eq.2} and the expression for $P_{k,m}(x)$ in Lemma~\ref{lem:LLT.Binomial} when $k$ is in the bulk $B_{m,x}$, the right-hand side of \eqref{eq:Cressie.generalization.eq.3} is equal to
        \begin{align}\label{eq:Cressie.generalization.eq.4.v.1}
            &\quad m^{-1/2} \cdot \left\{
                \begin{array}{l}
                    -\frac{1}{2} \Big[\big(\frac{1 - x}{x}\big)^{1/2} - \big(\frac{x}{1 - x}\big)^{1/2}\Big] \sum_{\substack{a \leq k \leq m \\ k\in B_{m,x}}} \delta_k \, \frac{\phi(\delta_k)}{\sigma_{m,x}} \\
                    + \frac{1}{6} \Big[x \big(\frac{1 - x}{x}\big)^{3/2} - (1 - x) \big(\frac{x}{1 - x}\big)^{3/2}\Big] \sum_{\substack{a \leq k \leq m \\ k\in B_{m,x}}} \delta_k^3 \, \frac{\phi(\delta_k)}{\sigma_{m,x}} \\[-1mm]
                    - (c - \tfrac{1}{2}) \frac{\phi(\delta_a)}{\sigma_x}
                \end{array}
                \right\} \notag \\
            &+ m^{-1} \cdot \left\{
            \begin{array}{l}
                \frac{1}{8} \Big[3 \big(\frac{1 - x}{x}\big) - 2 + 3 \big(\frac{x}{1 - x}\big)\Big] \sum_{\substack{a \leq k \leq m \\ k\in B_{m,x}}} \delta_k^2 \, \frac{\phi(\delta_k)}{\sigma_{m,x}} \\[1.5mm]
                -\frac{1}{12} \Big[2 x \big(\frac{1 - x}{x}\big)^2 - 1 + 2 (1 - x) \big(\frac{x}{1 - x}\big)^2\Big] \sum_{\substack{a \leq k \leq m \\ k\in B_{m,x}}} \delta_k^4 \, \frac{\phi(\delta_k)}{\sigma_{m,x}} \\[1.5mm]
                + \frac{1}{72} \Big[x^2 \big(\frac{1 - x}{x}\big)^3 - 2 \sigma_x^2 + (1 - x)^2 \big(\frac{x}{1 - x}\big)^3\Big] \sum_{\substack{a \leq k \leq m \\ k\in B_{m,x}}} \delta_k^6 \, \frac{\phi(\delta_k)}{\sigma_{m,x}} \\[-0.5mm]
                + \frac{1}{12} \big(1 - \frac{1}{x} - \frac{1}{1 - x}\big) \sum_{\substack{a \leq k \leq m \\ k\in B_{m,x}}} \frac{\phi(\delta_k)}{\sigma_{m,x}} \\[3mm]
                - \frac{1}{24} \, \frac{1}{\sigma_x^2} \sum_{\substack{a \leq k \leq m \\ k\in B_{m,x}}} (\delta_k^2 - 1) \, \frac{\phi(\delta_k)}{\sigma_{m,x}} \\[3.5mm]
                - \frac{1}{2} (c^2 - \tfrac{1}{4}) \frac{\delta_a \phi(\delta_a)}{\sigma_x^2}
            \end{array}
            \right\} \notag \\
            &+ F_m + \sqrt{\frac{8 \tau}{9 \pi m^{1/3}}} \exp\Big(-\frac{9 m^{1/3}}{32 \tau}\Big),
        \end{align}
        where the error $F_m$ satisfies
        \begin{align}
            |F_m|
            &\leq m^{-3/2} \cdot 12 \tau^5 \sum_{\substack{a \leq k \leq m \\ k\in B_{m,x}}} (1 + |\delta_k|^{12}) \frac{\phi(\delta_k)}{\sigma_{m,x}} \notag \\[-1mm]
            &\quad+ m^{-3/2} \cdot \frac{\max_{z\in [\delta_{a - c}, \delta_{a - 1/2}]} |z^2 - 1| \, \phi(z)}{6 \, \sigma_x^3} \cdot \Big|c^3 - \frac{1}{8}\Big| \notag \\
            &\quad+ m^{-2} \cdot \frac{1}{1920 \, \sigma_x^4} \sum_{\substack{a \leq k \leq m \\ k\in B_{m,x}}} \frac{\max_{z\in [\delta_{k - 1/2}, \delta_{k + 1/2}]} |z^4 - 6 z^2 + 3| \, \phi(z)}{\sigma_{m,x}}.
        \end{align}
        By Taylor expansions with Lagrange error bounds, note that
        \begin{equation}\label{eq:Cressie.generalization.eq.4.v.1.put.in.eq.1}
            \begin{aligned}
                \left|\phi(\delta_a) - \left\{\phi(\delta_{\tilde{a}}) - \frac{\delta_{\tilde{a}} \phi(\delta_{\tilde{a}})}{2 \sigma_{m,x}}\right\}\right|
                &\leq \frac{\max_{z\in [\delta_{\tilde{a}}, \delta_a]} |z^2 - 1| \, \phi(z)}{8 \sigma_{m,x}^2}, \\[1mm]
                \big|\delta_a \phi(\delta_a) - \delta_{\tilde{a}} \phi(\delta_{\tilde{a}})\big|
                &\leq \frac{\max_{z\in [\delta_{\tilde{a}}, \delta_a]} |z^2 - 1| \, \phi(z)}{2 \sigma_{m,x}},
            \end{aligned}
        \end{equation}
        where $\tilde{a} \leqdef a - \frac{1}{2}$.
        Also, the functions $y\mapsto |y|^j \phi(y)$ are monotonous by parts for all $j\in \{1,2,3,4,6,12\}$ (4 parts), so we can break off the sums $\sum_{k \geq k^{\star}} |\delta_k|^j \frac{\phi(\delta_k)}{\sigma_{m,x}}$ into 4 parts and then bound each of them with either an left or right Riemann sum on $[\delta_{k^{\star}},\infty)$ (the overlaps are just a safety measure around the three points where the graph changes direction).

        Therefore, for all $j\in \{1,2,3,4,6,12\}$,
        \begin{align}\label{eq:Cressie.generalization.eq.4.v.1.put.in.eq.2}
            \sum_{\substack{a \leq k < \infty \\ k\not\in B_{m,x}}} |\delta_k|^j \frac{\phi(\delta_k)}{\sigma_{m,x}}
            &\leq 4 \int_{\delta_{k^{\star}}}^{\infty} |y|^j \phi(y) \rd y \notag \\[-6mm]
            &\leq (\phi(\delta_{k^{\star}}) + \Psi(\delta_{k^{\star}})) \cdot
                \begin{cases}
                    4, &\mbox{if } j = 1, \\
                    4 (1 + |\delta_{k^{\star}}|), &\mbox{if } j = 2, \\
                    12 (1 + |\delta_{k^{\star}}|^2), &\mbox{if } j = 3, \\
                    16 (1 + |\delta_{k^{\star}}|^3), &\mbox{if } j = 4, \\
                    84 (1 + |\delta_{k^{\star}}|^5), &\mbox{if } j = 6,
                \end{cases}
        \end{align}
        where the last line has been verified with \texttt{Mathematica}, and
        \begin{equation}\label{eq:bound.delta.and.phi.plus.Psi}
            \begin{aligned}
                |\delta_{k^{\star}}|
                &\stackrel{\eqref{eq:bulk}}{\leq} m^{1/6} \tau, \\
                \phi(\delta_{k^{\star}}) + \Psi(\delta_{k^{\star}})
                &\stackrel{\eqref{eq:main.decomposition.last.term.bound.k.star.alone}}{\leq} \Big(\frac{1}{\sqrt{2\pi}} + \sqrt{\frac{2 \tau}{\pi m^{1/3}}}\Big) \exp\Big(-\frac{m^{1/3}}{8 \tau}\Big) \\
                &\stackrel{\phantom{\eqref{eq:main.decomposition.last.term.bound.k.star.alone}}}{\leq} \exp\Big(-\frac{m^{1/3}}{8 \tau}\Big) \quad \text{since } m \geq 2^{3/2} \tau^3.
            \end{aligned}
        \end{equation}
        Using \eqref{eq:Cressie.generalization.eq.4.v.1.put.in.eq.1}, \eqref{eq:Cressie.generalization.eq.4.v.1.put.in.eq.2} and \eqref{eq:bound.delta.and.phi.plus.Psi} in \eqref{eq:Cressie.generalization.eq.4.v.1}, the right-hand side of \eqref{eq:Cressie.generalization.eq.3} is equal to
        \begin{align}\label{eq:Cressie.generalization.eq.4.v.2}
            &\qquad m^{-1/2} \cdot \left\{
                \begin{array}{l}
                    -\frac{1}{2} \Big[\big(\frac{1 - x}{x}\big)^{1/2} - \big(\frac{x}{1 - x}\big)^{1/2}\Big] \sum_{k=a}^{\infty} \delta_k \, \frac{\phi(\delta_k)}{\sigma_{m,x}} \\
                    + \frac{1}{6} \Big[x \big(\frac{1 - x}{x}\big)^{3/2} - (1 - x) \big(\frac{x}{1 - x}\big)^{3/2}\Big] \sum_{k=a}^{\infty} \delta_k^3 \, \frac{\phi(\delta_k)}{\sigma_{m,x}} \\[-0.5mm]
                    - (c - \tfrac{1}{2}) \frac{\phi(\delta_{\tilde{a}})}{\sigma_x}
                \end{array}
                \right\} \notag \\
            &\quad+ m^{-1} \cdot \left\{
            \begin{array}{l}
                \frac{1}{8} \Big[3 \big(\frac{1 - x}{x}\big) - 2 + 3 \big(\frac{x}{1 - x}\big)\Big] \sum_{k=a}^{\infty} \delta_k^2 \, \frac{\phi(\delta_k)}{\sigma_{m,x}} \\[1.5mm]
                -\frac{1}{12} \Big[2 x \big(\frac{1 - x}{x}\big)^2 - 1 + 2 (1 - x) \big(\frac{x}{1 - x}\big)^2\Big] \sum_{k=a}^{\infty} \delta_k^4 \, \frac{\phi(\delta_k)}{\sigma_{m,x}} \\[1.5mm]
                + \frac{1}{72} \Big[x^2 \big(\frac{1 - x}{x}\big)^3 - 2 \sigma_x^2 + (1 - x)^2 \big(\frac{x}{1 - x}\big)^3\Big] \sum_{k=a}^{\infty} \delta_k^6 \, \frac{\phi(\delta_k)}{\sigma_{m,x}} \\[-0.5mm]
                + \frac{1}{12} \big(1 - \frac{1}{x} - \frac{1}{1 - x}\big) \sum_{k=a}^{\infty} \frac{\phi(\delta_k)}{\sigma_{m,x}} \\[2mm]
                - \frac{1}{24} \, \frac{1}{\sigma_x^2} \sum_{k=a}^{\infty} \delta_k^2 \, \frac{\phi(\delta_k)}{\sigma_{m,x}} + \frac{1}{24} \, \frac{1}{\sigma_x^2} \sum_{k=a}^{\infty} \frac{\phi(\delta_k)}{\sigma_{m,x}} \\[2mm]
                + \frac{1}{2} (c - \tfrac{1}{2}) \frac{\delta_{\tilde{a}} \phi(\delta_{\tilde{a}})}{\sigma_x^2} - \frac{1}{2} (c^2 - \tfrac{1}{4}) \frac{\delta_{\tilde{a}} \phi(\delta_{\tilde{a}})}{\sigma_x^2}
            \end{array}
            \right\} \notag \\
            &\quad+ \tilde{F}_m + \sqrt{\frac{8 \tau}{9 \pi m^{1/3}}} \exp\Big(-\frac{9 m^{1/3}}{32 \tau}\Big),
        \end{align}
        where the error $\tilde{F}_m$ satisfies
        \begin{align}\label{eq:F.tilde.error.bound}
            |\tilde{F}_m|
            &\leq m^{-3/2} \cdot \left\{
            \begin{array}{l}
                12 \tau^5 \cdot 4 \int_{\delta_{\tilde{a}}}^{\infty} (1 + |y|^{12}) \phi(y) \rd y \\[1mm]
                + \frac{\tau^3}{6} \big|c^3 - \frac{1}{8}\big| \cdot \max_{z\in [\delta_{a - c}, \delta_{\tilde{a}}]} |z^2 - 1| \, \phi(z) \\[1mm]
                + \frac{\tau^3}{8} |c - \tfrac{1}{2}| \cdot \max_{z\in [\delta_{\tilde{a}}, \delta_a]} |z^2 - 1| \, \phi(z) \\[1mm]
                + \frac{\tau^3}{4} |c^2 - \tfrac{1}{4}| \cdot \max_{z\in [\delta_{\tilde{a}}, \delta_a]} |z^2 - 1| \, \phi(z)
            \end{array}
            \right\} \notag \\[1mm]
            &\quad+ m^{-2} \cdot \frac{\tau^4}{1920} \sum_{k=a}^{\infty} \frac{\max_{z\in [\delta_{k - 1/2}, \delta_{k + 1/2}]} |z^4 - 6 z^2 + 3| \, \phi(z)}{\sigma_{m,x}} \notag \\[1mm]
            &\quad+
            \left\{
                \begin{array}{l}
                    m^{-1/2} \cdot 6 \tau^{3/2} \cdot (1 + |m^{1/6} \tau|^2) \\
                    + m^{-1} \cdot 14 \tau^3 \cdot (1 + |m^{1/6} \tau|^5)
                \end{array}
            \right\}
            \cdot \exp\Big(-\frac{m^{1/3}}{8 \tau}\Big).
        \end{align}

        \vspace{1mm}
        By the Euler-MacLaurin formula, we have
        \begin{equation}\label{eq:pleb.1}
            \begin{aligned}
            &\left|\sum_{k=a}^{\infty} \delta_k^j \frac{\phi(\delta_k)}{\sigma_{m,x}} -
                \left\{
                    \begin{array}{l}
                        \int_{\delta_{\tilde{a}}}^{\infty} y^j \phi(y) \rd y - \int_{\delta_{\tilde{a}}}^{\delta_a} y^j \phi(y) \rd y \\
                        + \frac{1}{2} \delta_a^j \frac{\phi(\delta_a)}{\sigma_{m,x}} - \frac{1}{12 \sigma_{m,x}^2} \big.\frac{\rd}{\rd y} y^j \phi(y)\big|_{y = \delta_a}
                    \end{array}
                \right\}
            \right| \\
            &\leq \frac{1}{12 \sigma_{m,x}^2} \int_{\delta_a}^{\infty} \Big[\frac{\rd^2}{\rd y^2} y^j \phi(y)\Big] \rd y.
            \end{aligned}
        \end{equation}
        Also, note that a Taylor expansion at $y = \delta_a$ yields
        \begin{equation}\label{eq:pleb.2}
            \begin{aligned}
                &\left|\left\{
                    \begin{array}{l}
                        - \int_{\delta_{\tilde{a}}}^{\delta_a} y^j \phi(y) \rd y \\
                        + \frac{1}{2} \delta_a^j \frac{\phi(\delta_a)}{\sigma_{m,x}} - \frac{1}{12 \sigma_{m,x}^2} \big.\frac{\rd}{\rd y} y^j \phi(y)\big|_{y = \delta_a}
                    \end{array}
                \right\} - \frac{-5}{24 \sigma_{m,x}^2} \cdot \Big.\frac{\rd}{\rd y} y^j \phi(y)\Big|_{y = \delta_a}\right| \\
                &\leq \frac{\max_{y\in \R} \big|\frac{\rd^2}{\rd y^2} y^j \phi(y)\big|}{48 \sigma_{m,x}^3} \leq \frac{1}{5 \sigma_{m,x}^3},
            \end{aligned}
        \end{equation}
        where the last inequality holds for any integer $0 \leq j \leq 6$.
        The last two equations together show that, for all $j\in \{0,1,2,3,4,6\}$,
        \begin{align}\label{eq:apply.bound}
            &\bigg|\sum_{k=a}^{\infty} \delta_k^j \frac{\phi(\delta_k)}{\sigma_{m,x}} - \int_{\delta_{\tilde{a}}}^{\infty} y^j \phi(y) \rd y\bigg| \notag \\
            &\qquad\leq \frac{1}{12 \sigma_{m,x}^2} \int_{\delta_a}^{\infty} \big\{j (j-1) |y|^{j-2} + (2j + 1) |y|^j + |y|^{j+2}\big\} \phi(y) \rd y \notag \\
            &\quad\qquad+ \frac{5}{24 \sigma_{m,x}^2} \cdot \big\{j |\delta_a|^{j-1} + |\delta_a|^{j+1}\big\} \phi(\delta_a) + \frac{1}{5 \sigma_{m,x}^3} \notag \\
            &\qquad\leq \frac{\tau^3}{5 m^{3/2}} + \frac{\tau^2}{m} \Big[\frac{1}{12} + \frac{5}{24}\Big] (\phi(\delta_a) + \Psi(\delta_a)) \cdot
                \begin{cases}
                    2 (1 + |\delta_a|), &\mbox{if } j = 0, \\
                    6 (1 + |\delta_a|^2), &\mbox{if } j = 1, \\
                    11 (1 + |\delta_a|^3), &\mbox{if } j = 2, \\
                    40 (1 + |\delta_a|^4), &\mbox{if } j = 3, \\
                    69 (1 + |\delta_a|^5), &\mbox{if } j = 4, \\
                    541 (1 + |\delta_a|^7), &\mbox{if } j = 6,
                \end{cases}
        \end{align}
        (the last line has been verified with \texttt{Mathematica}) so the right-hand side of \eqref{eq:Cressie.generalization.eq.3} is equal to
        \begin{equation}\label{eq:Cressie.generalization.eq.4.v.3}
            \begin{aligned}
            &\qquad m^{-1/2} \cdot \left\{
                \begin{array}{l}
                    -\frac{1}{2} \Big[\big(\frac{1 - x}{x}\big)^{1/2} - \big(\frac{x}{1 - x}\big)^{1/2}\Big] \int_{\delta_{\tilde{a}}}^{\infty} y \phi(y) \rd y \\
                    + \frac{1}{6} \Big[x \big(\frac{1 - x}{x}\big)^{3/2} - (1 - x) \big(\frac{x}{1 - x}\big)^{3/2}\Big] \int_{\delta_{\tilde{a}}}^{\infty} y^3 \phi(y) \rd y \\[-1mm]
                    - (c - \tfrac{1}{2}) \frac{\phi(\delta_{\tilde{a}})}{\sigma_x}
                \end{array}
                \right\} \\
            &\quad+ m^{-1} \cdot \left\{
            \begin{array}{l}
                \frac{1}{8} \Big[3 \big(\frac{1 - x}{x}\big) - 2 + 3 \big(\frac{x}{1 - x}\big)\Big] \int_{\delta_{\tilde{a}}}^{\infty} y^2 \phi(y) \rd y \\[1.5mm]
                -\frac{1}{12} \Big[2 x \big(\frac{1 - x}{x}\big)^2 - 1 + 2 (1 - x) \big(\frac{x}{1 - x}\big)^2\Big] \int_{\delta_{\tilde{a}}}^{\infty} y^4 \phi(y) \rd y \\[1.5mm]
                + \frac{1}{72} \Big[x^2 \big(\frac{1 - x}{x}\big)^3 - 2 \sigma_x^2 + (1 - x)^2 \big(\frac{x}{1 - x}\big)^3\Big] \int_{\delta_{\tilde{a}}}^{\infty} y^6 \phi(y) \rd y \\[1.5mm]
                + \frac{1}{12} \big(1 - \frac{1}{x} - \frac{1}{1 - x}\big) \int_{\delta_{\tilde{a}}}^{\infty} \phi(y) \rd y \\[1.5mm]
                - \frac{1}{24} \, \frac{1}{\sigma_x^2} \int_{\delta_{\tilde{a}}}^{\infty} y^2 \phi(y) \rd y + \frac{1}{24} \, \frac{1}{\sigma_x^2} \int_{\delta_{\tilde{a}}}^{\infty} \phi(y) \rd y \\[1.5mm]
                + \big[\frac{1}{2} (c - \tfrac{1}{2}) - \frac{1}{2} (c^2 - \tfrac{1}{4})\big] \frac{\delta_{\tilde{a}} \phi(\delta_{\tilde{a}})}{\sigma_x^2}
            \end{array}
            \right\} \\
            &\quad+ G_m,
            \end{aligned}
        \end{equation}
        where the error $G_m$ satisfies
        \begin{equation}\label{eq:error.bound.G.m}
            \begin{aligned}
                |G_m| \leq |\tilde{F}_m|
                &+ \sqrt{\frac{8 \tau}{9 \pi m^{1/3}}} \exp\Big(-\frac{9 m^{1/3}}{32 \tau}\Big) \\[1mm]
                &+ m^{-2} \cdot \tau^6 + m^{-3/2} \cdot 11 \tau^5 (1 + |\delta_a|^7) \cdot (\phi(\delta_a) + \Psi(\delta_a)),
            \end{aligned}
        \end{equation}
        after applying the bound \eqref{eq:apply.bound} in \eqref{eq:Cressie.generalization.eq.4.v.2} and noticing that our assumption $m\geq 10^3$ implies $m^{-1} \leq m^{-1/2} \cdot 10^{-3/2}$ in front of the second brace in \eqref{eq:Cressie.generalization.eq.4.v.3}.

        Since
        \begin{equation}
            \begin{aligned}
                &\int_{\delta_{\tilde{a}}}^{\infty} y \phi(y) \rd y = \phi(\delta_{\tilde{a}}), \\
                &\int_{\delta_{\tilde{a}}}^{\infty} y^3 \phi(y) \rd y = (\delta_{\tilde{a}}^2 + 2) \phi(\delta_{\tilde{a}}),
            \end{aligned}
        \end{equation}
        (these identities are easily verified with \texttt{Mathematica})
        we see that the first brace in \eqref{eq:Cressie.generalization.eq.4.v.3} is zero with the following choice of $c$ (denoted by $\tilde{c}$):
        \begin{align}
            \tilde{c}
            &\leqdef \frac{1}{2} + \left\{
                \begin{array}{l}
                    - \frac{1}{2} \Big[\big(\frac{1 - x}{x}\big)^{1/2} - \big(\frac{x}{1 - x}\big)^{1/2}\Big] \cdot 1 \\[1mm]
                    + \frac{1}{6} \Big[x \big(\frac{1 - x}{x}\big)^{3/2} - (1 - x) \big(\frac{x}{1 - x}\big)^{3/2}\Big] \cdot (\delta_{\tilde{a}}^2 + 2)
                \end{array}
                \right\} \cdot \sigma_x \notag \\
            &\hspace{0.95mm}= \frac{1}{2} + \frac{(1 - 2x)}{6} \left[\delta_{\tilde{a}}^2 - 1\right].
        \end{align}
        With that choice, now consider
        \begin{equation}\label{eq:error.terms.brace.2}
            c = \tilde{c} + \frac{w}{\sqrt{m}},
        \end{equation}
        in \eqref{eq:Cressie.generalization.eq.4.v.3}.
        The terms of order $m^{-1/2}$ cancel out and the terms of order $m^{-1}$ are :
        \begin{align}\label{eq:error.terms.brace.2.next}
            &\quad- w \, \frac{\phi(\delta_{\tilde{a}})}{\sigma_x} + \left\{
            \begin{array}{l}
                \frac{1}{8} \Big[3 \big(\frac{1 - x}{x}\big) - 2 + 3 \big(\frac{x}{1 - x}\big)\Big] \int_{\delta_{\tilde{a}}}^{\infty} y^2 \phi(y) \rd y \\[1.5mm]
                -\frac{1}{12} \Big[2 x \big(\frac{1 - x}{x}\big)^2 - 1 + 2 (1 - x) \big(\frac{x}{1 - x}\big)^2\Big] \int_{\delta_{\tilde{a}}}^{\infty} y^4 \phi(y) \rd y \\[1.5mm]
                + \frac{1}{72} \Big[x^2 \big(\frac{1 - x}{x}\big)^3 - 2 \sigma_x^2 + (1 - x)^2 \big(\frac{x}{1 - x}\big)^3\Big] \int_{\delta_{\tilde{a}}}^{\infty} y^6 \phi(y) \rd y \\[2mm]
                + \frac{1}{12} \big(1 - \frac{1}{x} - \frac{1}{1 - x}\big) \int_{\delta_{\tilde{a}}}^{\infty} \phi(y) \rd y \\[1mm]
                - \frac{1}{24} \, \frac{1}{\sigma_x^2} \int_{\delta_{\tilde{a}}}^{\infty} y^2 \phi(y) \rd y + \frac{1}{24} \, \frac{1}{\sigma_x^2} \int_{\delta_{\tilde{a}}}^{\infty} \phi(y) \rd y \\[1mm]
                + \big[\frac{1}{2} (c - \tfrac{1}{2}) - \frac{1}{2} (c^2 - \tfrac{1}{4})\big] \frac{\delta_{\tilde{a}} \phi(\delta_{\tilde{a}})}{\sigma_x^2}
            \end{array}
            \right\} \notag \\
            &= - w \, \frac{\phi(\delta_{\tilde{a}})}{\sigma_x} + \left\{
                \begin{array}{l}
                    \Big[\frac{\frac{1}{3}}{\sigma_x^2} - 1\Big] \int_{\delta_{\tilde{a}}}^{\infty} y^2 \phi(y) \rd y \\[1.5mm]
                    + \Big[\frac{-\frac{1}{6}}{\sigma_x^2} + \frac{7}{12}\Big] \int_{\delta_{\tilde{a}}}^{\infty} y^4 \phi(y) \rd y \\[1.5mm]
                    + \Big[\frac{\frac{1}{72}}{\sigma_x^2} - \frac{1}{18}\Big] \int_{\delta_{\tilde{a}}}^{\infty} y^6 \phi(y) \rd y \\[2mm]
                    + \Big[\frac{1}{12} - \frac{\frac{1}{24}}{\sigma_x^2}\Big] \int_{\delta_{\tilde{a}}}^{\infty} \phi(y) \rd y \\[1.5mm]
                    - \big[\frac{1}{2} (\tilde{c} - \tfrac{1}{2})^2 + (\tilde{c} - \tfrac{1}{2}) \frac{w}{\sqrt{m}} + \frac{w^2}{2 m}\big] \cdot \frac{\delta_{\tilde{a}} \phi(\delta_{\tilde{a}})}{\sigma_x^2}
                \end{array}
                \right\}.
        \end{align}
        Since
        \begin{equation}
            \begin{aligned}
                \int_{\delta_{\tilde{a}}}^{\infty} y^2 \phi(y) \rd y
                &= \delta_{\tilde{a}} \phi(\delta_{\tilde{a}}) + \Psi(\delta_{\tilde{a}}), \\
                \int_{\delta_{\tilde{a}}}^{\infty} y^4 \phi(y) \rd y
                &= \big[\delta_{\tilde{a}}^3 + 3 \delta_{\tilde{a}}\big] \phi(\delta_{\tilde{a}}) + 3 \Psi(\delta_{\tilde{a}}), \\
                \int_{\delta_{\tilde{a}}}^{\infty} y^6 \phi(y) \rd y
                &= \big[\delta_{\tilde{a}}^5 + 5 \delta_{\tilde{a}}^3 + 15 \delta_{\tilde{a}}\big] \phi(\delta_{\tilde{a}}) + 15 \Psi(\delta_{\tilde{a}}),
            \end{aligned}
        \end{equation}
        (again, these identities are easily verified with \texttt{Mathematica}), then all the terms with a $\Psi(\delta_{\tilde{a}})$ factor cancel out in \eqref{eq:error.terms.brace.2.next} and the expression simplifies to
        \begin{align}\label{eq:error.terms.brace.2.next.simplified}
            &\quad- w \, \frac{\phi(\delta_{\tilde{a}})}{\sigma_x} + \left\{
            \begin{array}{l}
                \Big[\frac{\frac{1}{3}}{\sigma_x^2} - 1\Big] \cdot \delta_{\tilde{a}} \phi(\delta_{\tilde{a}}) \\[1.5mm]
                + \Big[\frac{-\frac{1}{6}}{\sigma_x^2} + \frac{7}{12}\Big] \cdot \big[\delta_{\tilde{a}}^3 + 3 \delta_{\tilde{a}}\big] \phi(\delta_{\tilde{a}}) \\[1.5mm]
                + \Big[\frac{\frac{1}{72}}{\sigma_x^2} - \frac{1}{18}\Big] \cdot \big[\delta_{\tilde{a}}^5 + 5 \delta_{\tilde{a}}^3 + 15 \delta_{\tilde{a}}\big] \phi(\delta_{\tilde{a}}) \\[2mm]
                - \frac{1}{72} (1 - 2x)^2 \left[\delta_{\tilde{a}}^2 - 1\right]^2 \cdot \frac{\delta_{\tilde{a}} \phi(\delta_{\tilde{a}})}{\sigma_x^2}
            \end{array}
            \right\} \notag \\
            &\quad- m^{-1/2} \cdot \left\{\frac{w}{6} (1 - 2 x) [\delta_{\tilde{a}}^2 - 1] \cdot \frac{\delta_{\tilde{a}} \phi(\delta_{\tilde{a}})}{\sigma_x^2}\right\} - m^{-1} \cdot \left\{\frac{w^2}{2} \cdot \frac{\delta_{\tilde{a}} \phi(\delta_{\tilde{a}})}{\sigma_x^2}\right\} \notag \\
            &= - w \, \frac{\phi(\delta_{\tilde{a}})}{\sigma_x} + \left\{
            \begin{array}{l}
                \Big[\frac{\frac{1}{3}}{\sigma_x^2} - 1 + \frac{-\frac{3}{6}}{\sigma_x^2} + \frac{21}{12} + \frac{\frac{15}{72}}{\sigma_x^2} - \frac{15}{18} + \frac{-\frac{1}{72}}{\sigma_x^2} + \frac{1}{18}\Big] \cdot \delta_{\tilde{a}} \, \phi(\delta_{\tilde{a}}) \\[2mm]
                + \Big[\frac{-\frac{1}{6}}{\sigma_x^2} + \frac{7}{12} + \frac{\frac{5}{72}}{\sigma_x^2} - \frac{5}{18} + \frac{\frac{2}{72}}{\sigma_x^2} - \frac{2}{18}\Big] \cdot \delta_{\tilde{a}}^3 \, \phi(\delta_{\tilde{a}}) \\[2mm]
                + \Big[\frac{\frac{1}{72}}{\sigma_x^2} - \frac{1}{18} + \frac{-\frac{1}{72}}{\sigma_x^2} + \frac{1}{18}\Big] \cdot \delta_{\tilde{a}}^5 \, \phi(\delta_{\tilde{a}})
            \end{array}
            \right\} \notag \\
            &\quad- m^{-1/2} \cdot \left\{\frac{w}{6} (1 - 2 x) [\delta_{\tilde{a}}^2 - 1] \cdot \frac{\delta_{\tilde{a}} \phi(\delta_{\tilde{a}})}{\sigma_x^2}\right\} - m^{-1} \cdot \left\{\frac{w^2}{2} \cdot \frac{\delta_{\tilde{a}} \phi(\delta_{\tilde{a}})}{\sigma_x^2}\right\} \notag \\
            &= - w \, \frac{\phi(\delta_{\tilde{a}})}{\sigma_x} + \left\{\Big[\frac{\frac{1}{36}}{\sigma_x^2} - \frac{1}{36}\Big] \cdot \delta_{\tilde{a}} \, \phi(\delta_{\tilde{a}}) + \Big[\frac{-\frac{5}{72}}{\sigma_x^2} + \frac{7}{36}\Big] \cdot \delta_{\tilde{a}}^3 \, \phi(\delta_{\tilde{a}})\right\} \notag \\
            &\quad- m^{-1/2} \cdot \left\{\frac{w}{6} (1 - 2 x) [\delta_{\tilde{a}}^2 - 1] \cdot \frac{\delta_{\tilde{a}} \phi(\delta_{\tilde{a}})}{\sigma_x^2}\right\} - m^{-1} \cdot \left\{\frac{w^2}{2} \cdot \frac{\delta_{\tilde{a}} \phi(\delta_{\tilde{a}})}{\sigma_x^2}\right\}.
        \end{align}
        Now, we see that the second to last line is zero for the following choice of $w$ (denoted by $\tilde{w}$):
        \begin{align}
            \tilde{w}
            &\leqdef \bigg\{\Big[\frac{\frac{1}{36}}{\sigma_x^2} - \frac{1}{36}\Big] \cdot \delta_{\tilde{a}} + \Big[\frac{-\frac{5}{72}}{\sigma_x^2} + \frac{7}{36}\Big] \cdot \delta_{\tilde{a}}^3\bigg\} \cdot \sigma_x.
        \end{align}
        Therefore, the choice
        \begin{equation}
            \begin{aligned}
                c_{m,x}^{\star}(a) \leqdef \tilde{c} + \frac{\tilde{w}}{\sqrt{m}}
                &= \frac{1}{2} + \frac{(1 - 2x)}{6} \left[\delta_{\tilde{a}}^2 - 1\right] \\
                &\quad+ \frac{1}{\sqrt{m}} \cdot \frac{1}{\sigma_x} \left\{\Big[\frac{1}{36} - \frac{\sigma_x^2}{36}\Big] \cdot \delta_{\tilde{a}} + \Big[-\frac{5}{72} + \frac{7 \sigma_x^2}{36}\Big] \cdot \delta_{\tilde{a}}^3\right\},
            \end{aligned}
        \end{equation}
        guarantees that the terms of order $m^{-1/2}$ and $m^{-1}$ in \eqref{eq:Cressie.generalization.eq.4.v.3} are all zero.
        We can also control the last line of \eqref{eq:error.terms.brace.2.next.simplified}:
        \begin{equation}
            \left|
            \begin{array}{l}
                m^{-1/2} \cdot \left\{\frac{\tilde{w}}{6} (1 - 2 x) [\delta_{\tilde{a}}^2 - 1] \cdot \frac{\delta_{\tilde{a}} \phi(\delta_{\tilde{a}})}{\sigma_x^2}\right\} \\
                + \, m^{-1} \cdot \left\{\frac{\tilde{w}^2}{2} \cdot \frac{\delta_{\tilde{a}} \phi(\delta_{\tilde{a}})}{\sigma_x^2}\right\}
            \end{array}
            \right| \leq m^{-1/2} \cdot 10 \tau^4 (1 + |\delta_{\tilde{a}}|^6) \cdot \phi(\delta_{\tilde{a}}).
        \end{equation}
        Putting this error bound in \eqref{eq:Cressie.generalization.eq.4.v.3} with the choice $c = c_{m,x}^{\star}(a)$, we get that the right-hand side of \eqref{eq:Cressie.generalization.eq.3} is equal to $\tilde{G}_m$, where
        \begin{equation}\label{eq:complete.bound.CC}
            \begin{aligned}
                |\tilde{G}_m| \leq |\tilde{F}_m|
                &+ \sqrt{\frac{8 \tau}{9 \pi m^{1/3}}} \exp\Big(-\frac{9 m^{1/3}}{32 \tau}\Big) \\
                &+ m^{-2} \cdot \tau^6 + m^{-3/2} \cdot \left\{
                \begin{array}{l}
                    11 \tau^5 (1 + |\delta_a|^7) \cdot (\phi(\delta_a) + \Psi(\delta_a)) \\[1mm]
                    + 10 \tau^4 (1 + |\delta_{\tilde{a}}|^6) \cdot \phi(\delta_{\tilde{a}})
                \end{array}
                \right\}.
            \end{aligned}
        \end{equation}
        Since
        \begin{equation}
            \begin{aligned}
                |c_{m,x}^{\star}(a) - \tfrac{1}{2}| &\leq \tau (1 + |\delta_{\tilde{a}}|^3), \\
                |(c_{m,x}^{\star}(a))^2 - \tfrac{1}{4}| &\leq 4 \tau^2 (1 + |\delta_{\tilde{a}}|^6), \\
                |(c_{m,x}^{\star}(a))^3 - \tfrac{1}{8}| &\leq 10 \tau^3 (1 + |\delta_{\tilde{a}}|^9),
            \end{aligned}
        \end{equation}
        a more explicit bound on $|\tilde{G}_m|$ is:
        \begin{equation}\label{eq:near.end.survival.estimate}
            \begin{aligned}
                |\tilde{G}_m|
                &\stackrel{\eqref{eq:F.tilde.error.bound}}{\leq} m^{-3/2} \cdot \left\{
                \begin{array}{l}
                    12 \tau^5 \cdot 4 \int_{\delta_{\tilde{a}}}^{\infty} (1 + |y|^{12}) \phi(y) \rd y \\[1mm]
                    + 3 \tau^4 \cdot \max_{z\in \R} |z^2 - 1| \, \phi(z) \\[1mm]
                    + 11 \tau^5 \cdot \max_{z\in [0, \infty)} \big\{ (1 + |z|^7) (\phi(z) + \Psi(z))\big\} \\[1mm]
                    + 10 \tau^4 \cdot \max_{z\in [-\frac{1}{2 \sigma_{m,x}}, \infty)} \big\{(1 + |z|^6) \phi(z)\big\}
                \end{array}
                \right\} \\[1mm]
                &\qquad+ m^{-2} \cdot \left\{\tau^6 + \frac{\tau^4}{1920} \sum_{k=a}^{\infty} \frac{\max_{z\in [\delta_{k - 1/2}, \delta_{k + 1/2}]} |z^4 - 6 z^2 + 3| \, \phi(z)}{\sigma_{m,x}}\right\} \\
                &\qquad+ m^{-1/6} \cdot \left\{\left\{20 \tau^3 (1 + \tau^5)\right\}
                \cdot \exp\Big(-\frac{m^{1/3}}{8 \tau}\Big) + \sqrt{\frac{8 \tau}{9 \pi}} \exp\Big(-\frac{9 m^{1/3}}{32 \tau}\Big)\right\},
            \end{aligned}
        \end{equation}
        where we used the assumption that $a \geq m x$ to obtain that $\delta_{\tilde{a}} \geq -\frac{1}{2 \sigma_{m,x}}$ and thus restrict the domain of the last maximum inside the first brace in \eqref{eq:near.end.survival.estimate} to $[-\frac{1}{2 \sigma_{m,x}},\infty)$.
        Note that $m \geq 2^{3/2} \tau^3$ easily implies that $[-\frac{1}{2 \sigma_{m,x}},\infty)\subseteq [-1,\infty)$ and we can verify with \texttt{Mathematica} that
        \begin{equation}\label{eq:final.bound.eq.1}
            \begin{aligned}
                \max_{z\in [0, \infty)} \big\{ (1 + |z|^7) (\phi(z) + \Psi(z))\big\} &\leq 15, \\
                \max_{z\in [-1, \infty)} \big\{(1 + |z|^6) \phi(z)\big\} &\leq 5.
            \end{aligned}
        \end{equation}
        Also, the other maximum in the first brace of \eqref{eq:near.end.survival.estimate} can be bounded by noticing that
        \begin{equation}\label{eq:final.bound.eq.2}
            \max_{z\in \R} \big\{ |z^2 - 1| \phi(z)\big\} = \phi(0) = \frac{1}{\sqrt{2\pi}} \leq 1,
        \end{equation}
        Finally, we have
        \begin{equation}\label{eq:final.bound.eq.3}
            \begin{aligned}
                \sum_{k=a}^{\infty} \frac{\max_{z\in [\delta_{k - 1/2}, \delta_{k + 1/2}]} |z^4 - 6 z^2 + 3| \, \phi(z)}{\sigma_{m,x}}
                &\leq 4 \int_{\delta_{\tilde{a}}}^{\infty} (1 + |y|^{12}) \phi(y) \rd y \\
                &\leq 20 \, 800,
            \end{aligned}
        \end{equation}
        uniformly for $\tilde{a}\in [-1,\infty)$, and with much room to spare for the first inequality.
        Applying \eqref{eq:final.bound.eq.1}, \eqref{eq:final.bound.eq.2} and \eqref{eq:final.bound.eq.3} together in \eqref{eq:near.end.survival.estimate} give us the much simpler bound:
        \begin{equation}\label{eq:near.end.survival.estimate.next}
            \begin{aligned}
                |\tilde{G}_m|
                &\leq m^{-3/2} \cdot 250 \, 000 \cdot \tau^5 + m^{-1/6} \cdot 41 \tau^8 \cdot \exp\Big(-\frac{m^{1/3}}{8 \tau}\Big) \\
                &\leq m^{-3/2} \cdot 10^6 \tau^5, \qquad \text{by the second condition in \eqref{eq:survival.estimate.official.conditions.on.m}}.
            \end{aligned}
        \end{equation}
        To be clear, almost all the contribution to the $250 \, 000$ on the first line of \eqref{eq:near.end.survival.estimate.next} comes from $12 \tau^5 \cdot 4 \int_{\delta_{\tilde{a}}}^{\infty} (1 + |y|^{12}) \phi(y) \rd y$ in \eqref{eq:near.end.survival.estimate}.
        This ends the proof.
    \end{proof}

    We are now ready to prove the most important result of the paper, which improves on the versions of Tusn\'ady's inequality from \cite{MR1955348} and \cite{MR2154001} in the bulk, see Remark~\ref{rem:Tusnady.improvement}.

    \begin{proof}[Proof of Theorem~\ref{thm:Tusnady.inequality.improved}]
        Note that $s\mapsto (1 - \frac{1}{12 m}) s + \frac{1}{3 m^2} s^3$ is increasing, so Corollary~\ref{cor:refined.CC.x.1.2} yields
        \begin{equation}\label{eq:thm:Tusnady.inequality.improved.eq.1}
            \Phi\bigg(\frac{(1 - \frac{1}{12 m}) \tilde{s} + \frac{1}{3 m^2} \tilde{s}^3}{\sqrt{m} / 2}\bigg) \leq \PP(X_m \leq \tfrac{m}{2} + t) + 10^6 2^5 m^{-3/2},
        \end{equation}
        with $\tilde{s} \leqdef t - 1/2$ and $|t| \leq \frac{m^{2/3}}{2} - 1$.
        If we assume further that $|t| \leq \frac{1}{4} \sqrt{m \log m}$, then the mean value theorem yields
        \begin{equation}\label{eq:thm:Tusnady.inequality.improved.eq.2}
            10^6 2^5 m^{-3/2} \leq \Phi\bigg(\frac{(1 - \frac{1}{12 m}) \tilde{s} + \frac{1}{3 m^2} \tilde{s}^3}{\sqrt{m} / 2}\bigg) - \Phi(z),
        \end{equation}
        with
        \begin{equation}\label{eq:def.z}
            z \leqdef \frac{(1 - \frac{1}{12 m}) \tilde{s} + \frac{1}{3 m^2} \tilde{s}^3}{\sqrt{m} / 2} - \frac{10^6 2^5 m^{-3/2}}{\phi(\sqrt{\log m})},
        \end{equation}
        since $\Phi' = \phi$, and our assumptions $|t| \leq \frac{1}{4} \sqrt{m \log m}$ and {\color{black} $\sqrt{2\pi} \, 20^6 m^{-1} \leq \sqrt{\log m}$} together clearly imply that
        \begin{equation}\label{eq:interval.contains.MVT}
            \bigg[\frac{(1 - \frac{1}{12 m}) \tilde{s} + \frac{1}{3 m^2} \tilde{s}^3}{\sqrt{m} / 2}, \, z\bigg]\subseteq [-\sqrt{\log m}, \sqrt{\log m}].
        \end{equation}
        By plugging \eqref{eq:thm:Tusnady.inequality.improved.eq.2} into \eqref{eq:thm:Tusnady.inequality.improved.eq.1}, we get
        \begin{equation}\label{eq:thm:Tusnady.inequality.improved.eq.3}
            \Phi(z) \leq \PP(X_m \leq \tfrac{m}{2} + t).
        \end{equation}
        We can rewrite \eqref{eq:def.z} as
        \begin{equation}\label{eq:cubic.equation}
            \frac{1}{3 m^2} \tilde{s}^3 + \Big(1 - \frac{1}{12 m}\Big) \tilde{s} = \tilde{z}, \quad \text{where } \tilde{z} \leqdef \frac{\sqrt{m}}{2} \Big(z + \frac{10^6 2^5 m^{-3/2}}{\phi(\sqrt{\log m})}\Big).
        \end{equation}
        We can solve this cubic equation in $t$.
        Specifically, for a cubic equation of the form $a x^3 + c x = d$ with $a,c > 0$, the unique real solution is
        \begin{equation}
            x = \sqrt[3]{\frac{d}{2a} + \sqrt{\Big(\frac{d}{2 a}\Big)^2 + \Big(\frac{c}{3 a}\Big)^3}} + \sqrt[3]{\frac{d}{2a} - \sqrt{\Big(\frac{d}{2 a}\Big)^2 + \Big(\frac{c}{3 a}\Big)^3}}.
        \end{equation}
        Therefore, the solution $t = \frac{1}{2} + \tilde{s}$ in \eqref{eq:cubic.equation} is
        \begin{equation}\label{eq:thm:Tusnady.inequality.improved.eq.4}
            \begin{aligned}
                t
                &= \frac{1}{2} + \sqrt[3]{\frac{3 m^2 \tilde{z}}{2} + \sqrt{\Big(\frac{3 m^2 \tilde{z}}{2}\Big)^2 + \Big(m^2 - \frac{m}{12}\Big)^3}} \\
                &\qquad+ \sqrt[3]{\frac{3 m^2 \tilde{z}}{2} - \sqrt{\Big(\frac{3 m^2 \tilde{z}}{2}\Big)^2 + \Big(m^2 - \frac{m}{12}\Big)^3}}.
            \end{aligned}
        \end{equation}
        Using a Taylor expansion (for $m^{-1}$ at $0$) together with Lagrange's error bound, we have, for all $|\tilde{z}| \leq \sqrt{m \log m}$, 
        \begin{equation}\label{eq:cubic.equation.inverse.Taylor}
            \begin{aligned}
                \left|t - \left\{\frac{1}{2} + \tilde{z} + \frac{\tilde{z}}{12 m} - \frac{\tilde{z}^3}{3 m^2}\right\}\right| \leq \frac{m^2 |\tilde{z}| + m |\tilde{z}|^3 + |\tilde{z}|^5}{3 m^4} \leq \frac{(\log m)^{5/2}}{m^{3/2}}.
            \end{aligned}
        \end{equation}
        Since $\big|\frac{10^6 2^5 m^{-3/2}}{\phi(\sqrt{\log m})}\big| = \sqrt{2\pi} \cdot 10^6 2^5 m^{-1}$ and $|z| \leq \sqrt{\log m}$ by \eqref{eq:interval.contains.MVT}, Equations \eqref{eq:thm:Tusnady.inequality.improved.eq.3} and \eqref{eq:cubic.equation.inverse.Taylor} together yield
        \begin{align}\label{eq:thm:Tusnady.inequality.improved.eq.5}
            \Phi(z)
            &\leq \PP\bigg(X_m \leq \frac{m}{2} + \frac{1}{2} + \Big(1 + \frac{1}{12 m}\Big) \tilde{z}  - \frac{\tilde{z}^3}{3 m^2} + \frac{(\log m)^{5/2}}{m^{3/2}}\bigg), \notag \\
            &\leq \PP\bigg(X_m \leq \frac{m}{2} + \frac{1}{2} + \frac{\sqrt{m}}{2} z + \frac{z - z^3}{24 \sqrt{m}} + \frac{2 \cdot 20^6}{m}\bigg).
        \end{align}
        By applying $F^{\star}$ on both sides, we obtain
        \begin{equation}
            X_m - \frac{m}{2} - \frac{\sqrt{m}}{2} Z - \frac{Z - Z^3}{24 \sqrt{m}}  \leq  \frac{1}{2} + \frac{2 \cdot 20^6}{m}, \quad \text{for } |Z| \leq \sqrt{\log m}.
        \end{equation}
        The conclusion follows by the symmetry of $X_m - \frac{m}{2}$ and $Z$.
    \end{proof}

\section*{Data Availability Statement}

    Data sharing not applicable to this article as no datasets were generated or analysed during the current study.

%
%

\section*{References}
\phantomsection
\addcontentsline{toc}{chapter}{References}

\bibliographystyle{authordate1}
\bibliography{Ouimet_2021_improvement_Tusnady_bib}

\end{document}